\title{A generalization of Newton's quadrilateral theorem and an elementary proof of Minthorn's quadrilateral theorem}
\author{Rauan Kaldybayev}
\affil{Williams College}
\date{November 2022}
\newcommand{\R}{\mathbb{R}}
\newcommand{\bs}{\backslash}
\newcommand{\dq}[1]{\lq\lq #1\rq\rq}
\newcommand{\lr}[1]{\left(#1\right)}
\newcommand{\ol}[1]{\overline{#1}}
\newcommand{\column}[2]{\binom{#1}{#2}}
\newcounter{dummy} \numberwithin{dummy}{section}
\newtheorem{lemma}[dummy]{Lemma}
\newtheorem{proposition}[dummy]{Proposition}
\newtheorem{theorem}[dummy]{Theorem}
\newtheorem{corollary}[dummy]{Corollary}
\newtheorem{conjecture}[dummy]{Conjecture}
\theoremstyle{definition} \newtheorem{definition}[dummy]{Definition}
\providecommand{\keywords}[1]
{
  \small	
  \textbf{\textit{Keywords---}} #1
}
\begin{document}

\nocite{*}

\maketitle

\begin{abstract}
Newton's quadrilateral theorem can be phrased as follows. If $H$ is a circle that is tangent to the four extended sides of a non-parallelogram quadrilateral $Q$, the center of $H$ lies on the Newton line of $Q$. We prove that the theorem remains true if $H$ is an arbitrary hyperbola or ellipse. A quadrilateral can have at most one circle tangent to it but infinitely many ellipses and hyperbolas. We also prove a converse of Newton's theorem, namely that every point on the Newton line, excepting three singular points, is the center of some ellipse or hyperbola tangent to the four extended sides of $Q$. Using the same proof techniques we give an elementary proof of the (lesser known) Minthorn's quadrilateral theorem, which concerns quadrilaterals passing through the four vertices of $Q$. Our proofs are analytic; they rely on linear algebra and affine transformations.
\end{abstract}

\keywords{conic, quadrilateral, tangent, locus, line, nine-point conic}

\section{Introduction} \label{section:introduction}

\begin{figure}[h]
\caption{An illustration of Conjectures \ref{conj:codeparade} and \ref{conj:centers of passing conics} obtained by a Monte Carlo simulation.}
\centering
\includegraphics[width=0.5\textwidth]{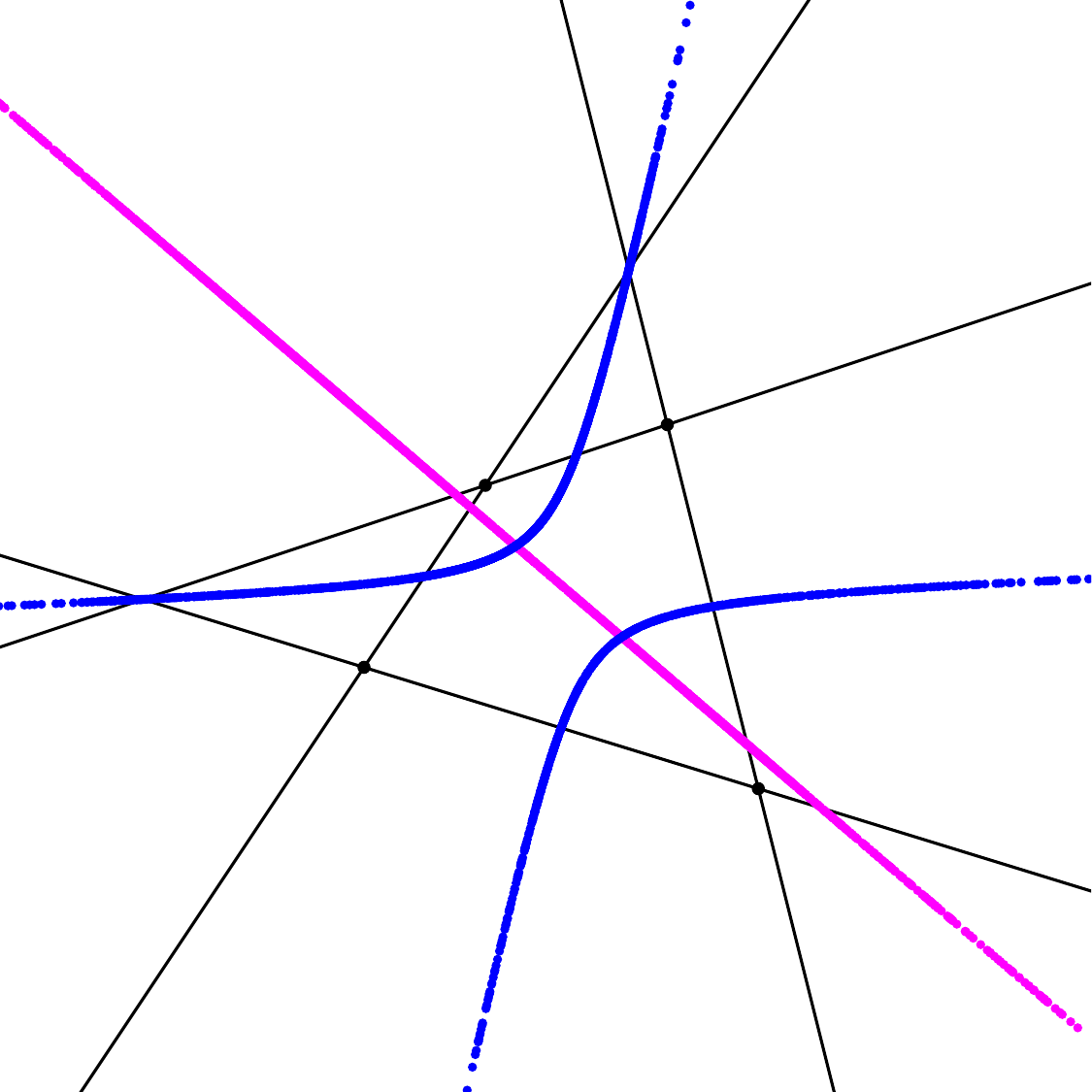} \label{fig:conjs illustration}

\textit{Centers of tangent conics are marked in magenta, centers of passing conics are marked in blue. We fixed a quadrilateral in the plane, randomly generated 3000 tangent conics and 3000 passing conics, and marked the centers of the conics in appropriate colors. Figure \ref{fig:conjs illustration}, as well as the other figures in this paper, was produced in Wolfram Mathematica.}
\end{figure}

This paper was inspired by a conjecture from a video \cite{codeparade} that content creator CodeParade uploaded to YouTube. The conjecture can be approximately phrased as follows:

\begin{conjecture} \label{conj:codeparade}
(CodeParade) If $Q$ is a quadrilateral, the set of centers of all conics tangent to the four extended sides of $Q$ is a straight line.
\end{conjecture}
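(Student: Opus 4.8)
The plan is to bypass the tangency conditions directly and instead work with the \emph{dual} (line-)conic, where tangency becomes linear. Represent a conic in the projective plane by a symmetric $3 \times 3$ matrix $M$, so that the conic is the zero locus of $\mathbf{x}^T M \mathbf{x}$ for $\mathbf{x} = (x,y,1)^T$, and write $N = M^{\mathrm{adj}}$ for its adjugate, which is the matrix of the dual line-conic (well defined up to scale when $M$ is nondegenerate, and satisfying $N^{\mathrm{adj}} \propto M$). The classical fact I would invoke is that a line $\ell$ is tangent to $M$ exactly when $\ell^T N \ell = 0$. Hence, writing $\ell_1,\dots,\ell_4$ for the four extended sides of $Q$, the duals of all conics tangent to $Q$ are precisely the symmetric matrices $N$ solving the four equations $\ell_i^T N \ell_i = 0$. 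Since $\ell^T N \ell = \langle \ell \ell^T, N\rangle$ is \emph{linear} in the six entries of $N$, the solution set is a linear subspace of the six-dimensional space of symmetric matrices, cut out by the four rank-one matrices $\ell_i \ell_i^T$; for the sides of an honest quadrilateral these are independent, so the subspace is two-dimensional. Projectively, the dual conics therefore form a pencil $N = \lambda N_1 + \mu N_2$.

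It then remains to read the center off of $N$. The center of a conic is the pole of the line at infinity $\ell_\infty = (0,0,1)^T$, and the pole of a line $\ell$ with respect to $M$ is the point $M^{\mathrm{adj}} \ell = N\ell$; thus the center of the conic with dual $N$ is simply $N\ell_\infty$, the third column of $N$. (A one-line check against the usual formula from $\nabla(\mathbf{x}^T M \mathbf{x}) = 0$ confirms this.) Applying it to the pencil gives
\[
\mathrm{center}(N) = N\ell_\infty = \lambda\, N_1\ell_\infty + \mu\, N_2\ell_\infty = \lambda\, \mathbf{c}_1 + \mu\, \mathbf{c}_2,
\]
where $\mathbf{c}_i = N_i\ell_\infty$ are two fixed homogeneous points. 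As $[\lambda:\mu]$ ranges over the pencil, the point $\lambda \mathbf{c}_1 + \mu \mathbf{c}_2$ sweeps out exactly the projective line through $\mathbf{c}_1$ and $\mathbf{c}_2$. Dehomogenizing, the finite centers — the centers of the tangent ellipses and hyperbolas — all lie on one affine line, while the parabolas account for its point at infinity. Because the map $[\lambda:\mu] \mapsto \lambda\mathbf{c}_1 + \mu\mathbf{c}_2$ is a projective isomorphism onto that line, the centers in fact fill the whole line apart from the images of the degenerate pencil members, which should be exactly the exceptional points anticipated in the abstract.

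The main obstacle, I expect, is not this computation but the \emph{nondegeneracy} needed to conclude ``line'' rather than ``point.'' The delicate claim is that $\mathbf{c}_1$ and $\mathbf{c}_2$ are genuinely distinct, i.e.\ that $N \mapsto N\ell_\infty$ is not constant up to scale on the pencil; this is precisely where the non-parallelogram hypothesis must enter. Indeed, a quick test case (the unit square) shows that all four tangency functionals remain independent, yet the entire pencil collapses to the single center $(1/2,1/2)$ — so for a parallelogram the locus degenerates to a point, consistent with the Newton line being undefined there. The remaining care concerns the degenerate members of the pencil, namely the $N$ of rank less than three whose primal $N^{\mathrm{adj}}$ is not a smooth conic (the ``vertex-pair'' line-conics); these must be identified and excised, as they produce the singular points removed in the converse. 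To make $N_1, N_2$ and these special points explicit, I would first apply an affine transformation, which preserves lines, conics, centers, and collinearity, to place three sides of $Q$ in a normalized position and reduce the whole verification to a short linear-algebra calculation.
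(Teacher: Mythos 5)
Your proposal is correct in outline but takes a genuinely different route from the paper. The paper deliberately avoids projective geometry: it normalizes three vertices to $(0,0)$, $(1,0)$, $(0,1)$ by an affine map, writes each tangency condition via the identity $n^T A^{-1} n = (b - n^T c)^2$ (its Lemma 2.9), and then observes that subtracting the four resulting equations leaves a single \emph{linear} equation in the center $c$, which is checked by hand to be the equation of the Newton line through $\mu$, $\nu$, $\tau$; the converse is proved by explicitly writing down the matrix $B$ of a tangent conic centered at a given point of $L$ and factoring $\det B$ to locate the three excluded points. Your dual-pencil argument replaces all of this with two structural facts: tangency is linear in the dual conic $N$, and the center $N\ell_\infty$ is linear in $N$, so the centers of the two-parameter family sweep out a line. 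This buys exactly the conceptual explanation the paper's conclusion laments not having --- the linearity is no longer a miracle of cancellation --- and it predicts the three singular points as the images of the three rank-two (point-pair) members of the pencil, whose ``centers'' $p+q$ dehomogenize to the diagonal midpoints $\mu$, $\nu$, $\tau$, and the parabola member as the point at infinity. What the paper's computation buys instead is complete elementarity and explicit formulas for the tangent conic at each center. To make your sketch airtight you would still need to prove the facts you invoke: the independence of $\ell_1\ell_1^T,\dots,\ell_4\ell_4^T$ (normalize the four lines to $x=0$, $y=0$, $z=0$, $x+y+z=0$, where the four squares are visibly independent); the nondegeneracy $\mathbf{c}_1 \not\parallel \mathbf{c}_2$ under the non-parallelogram hypothesis; and one point you do not mention, namely that no rank-three member of the pencil is a definite matrix (whose adjugate would be an empty ``imaginary ellipse'' rather than a genuine conic) --- this is immediate, since $\ell^T N \ell = 0$ for a nonzero $\ell$ forces $N$ to be indefinite or degenerate, but it must be said.
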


We make this claim precise and prove it as Theorem \ref{thm:theorem 1}. To the best of our knowledge, the current paper is the first to prove Conjecture \ref{conj:codeparade}. Theorem \ref{thm:theorem 1} can be viewed as a generalization of an old theorem of Newton \cite[p.~117--118]{charmingproofs}. Inspired by Conjecture \ref{conj:codeparade} is Conjecture \ref{conj:centers of passing conics}:

\begin{conjecture} \label{conj:centers of passing conics}
If $Q$ is a convex quadrilateral, the set of centers of all conics passing through the four vertices of $Q$ is a hyperbola; if $Q$ is not convex, this set is an ellipse.
\end{conjecture}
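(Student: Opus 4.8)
The plan is to prove the sharper statement that the locus depends only on the four vertices as a point set, and that it is a hyperbola precisely when the points are in convex position and an ellipse precisely when one vertex lies inside the triangle of the other three (the genuine non-convex case). Since the conics through four points depend only on the point set, I read ``convex quadrilateral'' as ``four points in convex position.'' As in Newton's theorem I would exclude parallelograms (more generally, centrally symmetric configurations), where every conic through the vertices shares the center of symmetry and the locus collapses to a single point. The first reduction is affine: an affine map sends conics to conics of the same type (ellipse, parabola, hyperbola), sends the center of a conic to the center of its image (it conjugates a central symmetry to a central symmetry), and preserves convex position. It therefore carries the center locus of one configuration to that of its image, so I may normalize the vertices to $P_1=(0,0)$, $P_2=(1,0)$, $P_3=(0,1)$, $P_4=(p,q)$, where convex position is equivalent to explicit sign conditions on $p$, $q$, and $p+q-1$.

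Next I would describe the locus. The conics through $P_1,\dots,P_4$ form a pencil $\lambda C_1+\mu C_2$ spanned by two degenerate members, the line-pair conics $C_1=\ell_{12}\,\ell_{34}$ and $C_2=\ell_{13}\,\ell_{24}$, where $\ell_{ij}$ is the line through $P_i$ and $P_j$; under the normalization these factors are explicit (for instance $\ell_{12}$ is $y=0$ and $\ell_{13}$ is $x=0$). Writing a conic as $ax^2+bxy+cy^2+dx+ey+f=0$, its center solves the linear system $2ax+by+d=0$, $bx+2cy+e=0$. For the pencil member with parameter $t=\mu/\lambda$ every coefficient is affine-linear in $t$, so the two center equations are each linear in $(x,y)$ and in $t$. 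Solving each for $t$ and equating produces a single relation that is quadratic in $(x,y)$: the locus of centers therefore lies on a conic $\mathcal{C}$, the classical nine-point conic of the quadrangle.

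It remains to determine the type of $\mathcal{C}$. The key observation is that the center of a conic escapes to infinity exactly when the conic degenerates to a parabola, so the real points at infinity of $\mathcal{C}$ correspond to the real parabolas in the pencil. Their number is the number of real roots on $\mathbb{P}^1(\mathbb{R})$ of the binary quadratic form $\delta(\lambda,\mu)$ given by the discriminant $b^2-4ac$ of the quadratic part of $\lambda C_1+\mu C_2$. Because the three degenerate members are pairs of real lines, which are hyperbola-type with $\delta>0$, the form $\delta$ takes positive values; hence either $\delta$ is indefinite, with two real roots and two parabolas, forcing $\mathcal{C}$ to meet the line at infinity in two real points and so to be a hyperbola, or $\delta$ is positive definite, with no real root and no parabola, forcing $\mathcal{C}$ to be bounded and so an ellipse.

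Finally I would tie the sign of $\delta$ to convexity. Four points are in convex position if and only if some ellipse passes through all of them, i.e. the pencil contains a member with $\delta<0$, which is exactly the indefinite case; and one vertex lies inside the triangle of the others precisely when no ellipse or parabola passes through the four, i.e. when $\delta$ is positive definite. The main obstacle, and the only genuine computation, is to verify this last equivalence directly from the normalized coordinates: I would compute $\delta(\lambda,\mu)$ explicitly, read off its own discriminant as a function of $(p,q)$, and check that its sign coincides with the convex-position conditions, while separately disposing of the boundary cases (three collinear vertices, where the construction breaks, and the semidefinite case where $\delta$ has a double root and the pencil contains a single parabola). A secondary point is to confirm that the rational parametrization $t\mapsto(x,y)$ sweeps out all of $\mathcal{C}$ save finitely many points, so that the locus is genuinely the whole conic rather than a proper subset of it.
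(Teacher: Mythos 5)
Your outline is correct in substance but takes a genuinely different route from the paper. The paper (Theorem \ref{thm:theorem 2 square}) writes down the quadratic $\Gamma$ for the nine-point conic explicitly, verifies by direct algebra that every center of a conic through the four vertices satisfies $\Gamma(c)=0$, exhibits for each point $d$ with $\Gamma(d)=0$ an explicit symmetric matrix $B$ giving a conic through the vertices centered at $d$ (with several leftover cases handled by separate explicit conics), and settles the ellipse/hyperbola dichotomy by a sign analysis of $\det \text{H}_\Gamma = -4p_1p_2(p_1+p_2-1)$ against the convexity conditions on $p$. You instead derive the locus by eliminating the pencil parameter $t$ from the two linear center equations, and you determine the type by counting real points at infinity of the center locus, which you identify with the real parabolas of the pencil via the sign behaviour of the discriminant form $\delta(\lambda,\mu)$. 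Your route explains \emph{where} the nine-point conic comes from and \emph{why} the dichotomy is governed by convexity (convex position is equivalent to the pencil containing an ellipse, i.e. to $\delta$ being indefinite), which is exactly the kind of insight the paper laments its own computation does not provide; the paper's route buys complete explicitness, stays strictly inside affine linear algebra, and delivers the nine special points and the center of $H$ as a byproduct.

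Two points in your plan need more care than you give them. First, your only exclusion is of centrally symmetric configurations, but the paper's hypotheses also exclude trapezoids and quadrilaterals with parallel diagonals, and these exclusions are not cosmetic: if two of the four points are joined by a line parallel to the line joining the other two, the corresponding degenerate member of the pencil is a pair of parallel lines with a whole line of centers, and simultaneously the nine-point conic itself degenerates (in the paper's normalization $\Gamma$ evaluated at its own center equals $-\frac{1}{16}(p_1-1)(p_2-1)(p_1+p_2)$, which vanishes exactly in these cases). So either you exclude these configurations or you restrict to central conics; your computation of $\delta$ will not by itself surface this. Second, in the indefinite case the conclusion ``two real points at infinity, hence a hyperbola'' silently assumes $\mathcal{C}$ is irreducible; you should either note that the center map $t \mapsto c(t)$ is a quadratic parametrization whose image is an irreducible rational curve of degree at most two and rule out the line case, or verify nondegeneracy directly as the paper does. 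With those repairs the argument goes through, and your surjectivity worry resolves itself: the degree-one parametrization of the conic by the pencil hits every affine real point of $\mathcal{C}$, the missing parameter values being precisely the parabolas, whose would-be centers are the points at infinity.
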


We make this claim precise, make it stronger, and prove it as Theorem \ref{thm:theorem 2}. Conjecture \ref{conj:centers of passing conics} was proved in 1912 by  Maud Minthorn \cite{minthorn}, but we give in this paper a much shorter and more elementary proof. Conjectures \ref{conj:codeparade} and \ref{conj:centers of passing conics} are illustrated in Figure \ref{fig:conjs illustration}.

Conjectures \ref{conj:codeparade} and \ref{conj:centers of passing conics} might be dual to each other, but the exact nature of this duality (if it exists) remains a mystery.

In this paper, we do not use complex numbers or projective geometry; instead, we use real numbers and linear algebra. Our proofs are analytic. Despite the elementary nature of Conjectures \ref{conj:codeparade} and \ref{conj:centers of passing conics}, the proofs of Theorems \ref{thm:theorem 1} and \ref{thm:theorem 2} are surprisingly unenlightening - consisting of dry formal manipulations, they offer very little insight as to why Theorems \ref{thm:theorem 1} and \ref{thm:theorem 2} are true.

We identify the Euclidean plane with $\R^2$; for instance, $(4, \sqrt{7}/2)$ is a point in the plane. Geometric figures are viewed as subsets of $\R^2$. Since $\R^2$ is a vector space, we in some sense treat points as vectors. For instance, if $x$ and $y$ are points in the plane, $\frac{1}{2}x + \frac{1}{2}y$ is their midpoint. The word \dq{collinear} is therefore ambiguous: $(1, -1)$, $(1, 0)$, and $(1, 1)$ are collinear when viewed as points in the plane but not when viewed as elements of a vector space; so we avoid the word \dq{collinear.} Given $x, y, z \in \R^2$, we say that $x, y, z$ \textit{lie on a line} if there exists a line $L$ that contains $x, y, z$.  Given $x, y \in \R^2$, we say that $x$ and $y$ are \textit{multiples of each other} if one vector can be obtained from the other by multiplying it by a real number. Given $x \in \R^2$, we let $x_1$ be the first component of $x$ and $x_2$ the second component, so that $x = (x_1, x_2)$. We denote by $|x|$ the absolute value of $x$, which is equal to $\sqrt{x_1^2 + x_2^2}$. If $A$ is a matrix, we denote the $i$-th entry of the $j$-th column of $A$ as $A_{i j}$, where indexing starts from $1$, so that $A_{11}$ is the top left entry of $A$. To denote the vector whose components are $2$ and $-5$, we will either write $(2, -5)$ (note the comma) or $\column{2}{-5}$. Though row vectors are used in this paper, we do not write row vectors explicitly and will instead express them as transposes of column vectors. If $f: \R^2 \to \R$ is a differentiable function, we let $\nabla f$ be the gradient of $f$, which we treat as a function from $\R^2$ to $\R^2$. One might argue that $\nabla f(x)$ should be a row vector instead of a column vector, but we say it is a column vector for simplicity.

\section{Preliminaries: affine transformations, lines, conics, and the notion of tangency} \label{section:preliminaries}

\begin{definition} \label{defn:affine trans}
A function $\phi: \R^2 \to \R^2$ is an \textit{affine transformation} if and only if it can be expressed as
$$\phi(x) = A x + b$$
for some invertible matrix $A$ and vector $b$.
\end{definition}

Affine transformations are going to be extremely useful to us in this paper, since they allow us to \dq{bend} the plane to our convenience. They preserve the \dq{essence} of geometric figures while letting us vary the details. Under affine transformations, lines map to lines, conics map to conics, and the topology is preserved.

It is known that conic sections can be described in algebraic terms by quadratic polynomials \cite{basictextbook}. It is also known that certain conics, including ellipses and hyperbolas, have a \dq{center.} We proceed to give a formal definition of a center of a geometric figure and prove a few lemmas about how a center of a conic relates to the conic's algebraic description.

\begin{definition} \label{defn:center}
Let $U$ be any subset of $\R^2$, and let $c$ be a point in $\R^2$. Say that $c$ is a \textit{center} of $U$ if and only if $U$ is reflectionally symmetric with respect to $c$. That is, if and only if whenever $x$ lies in $U$, $2 c - x$ also lies in $U$.
\end{definition}

An ellipse or hyperbola has exactly one center. A parabola has zero centers. Certain degenerate conics, such as the line $\{x \in \R^2: x_1^2 = 0\}$, have infinitely many centers, while others, like the \dq{cross} figure $\{x \in \R^2: x_1 x_1 = 0\}$, have exactly one center. Despite the seeming complexity, Lemma \ref{lemma:conic center} gives a simple algebraic description of centers of conic sections. Lemma \ref{lemma:matrix three vectors zero} will be used in the proof.

\begin{lemma} \label{lemma:matrix three vectors zero}
Let $A$ be a symmetric two-by-two matrix, and let $u, v, w$ be three vectors no two of which are multiples of each other. If $u^T A u = 0$, $v^T A v = 0$, and $w^T A w = 0$, then $A$ is the zero matrix.
\end{lemma}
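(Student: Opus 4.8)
The plan is to reduce the statement to a single fact about binary quadratic forms: a nonzero homogeneous quadratic in two variables vanishes along at most two distinct directions through the origin. First I would use the symmetry of $A$ to write $A = \begin{pmatrix} a & b \\ b & c \end{pmatrix}$, so that for any $x = (x_1, x_2)$ one has $x^T A x = a x_1^2 + 2 b x_1 x_2 + c x_2^2$. The three hypotheses then become three homogeneous linear equations in the three unknowns $a, b, c$, and proving $A = 0$ amounts to showing that this linear system has only the trivial solution, i.e. that the coefficient matrix
$$M = \begin{pmatrix} u_1^2 & 2 u_1 u_2 & u_2^2 \\ v_1^2 & 2 v_1 v_2 & v_2^2 \\ w_1^2 & 2 w_1 w_2 & w_2^2 \end{pmatrix}$$
is nonsingular. (Note that the hypothesis rules out the zero vector, since the zero vector is a multiple of every vector, so $u, v, w$ determine genuine directions.)

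To show $M$ is nonsingular I would argue by contradiction. Suppose $A \neq 0$ and set $q(x) = x^T A x$; by hypothesis $q$ vanishes on each of $u, v, w$, hence along the three directions they determine. I then claim a nonzero $q$ can vanish along at most two distinct directions. If $a \neq 0$, then $q(1,0) = a \neq 0$, so every root direction has nonzero second coordinate and corresponds to a root $s = x_1/x_2$ of the one-variable quadratic $a s^2 + 2 b s + c$, of which there are at most two. If $a = 0$, then $q(x) = x_2\,(2 b x_1 + c x_2)$, and since $A \neq 0$ forces $(b,c) \neq (0,0)$, this is a product of at most two linear factors and again cuts out at most two directions. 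As $u, v, w$ give three pairwise distinct directions, this is a contradiction, so $A = 0$.

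Alternatively, and this is the cleaner endpoint I would ultimately write down, I would compute $\det M$ directly. Up to an overall sign and the factor of $2$ contributed by the middle column, $\det M$ equals $(u_1 v_2 - u_2 v_1)(v_1 w_2 - v_2 w_1)(u_1 w_2 - u_2 w_1)$, the product of the three pairwise $2 \times 2$ cross products. Each factor vanishes precisely when the corresponding pair of vectors are multiples of each other, so the hypothesis that no two of $u, v, w$ are multiples guarantees $\det M \neq 0$, and the system has only the trivial solution.

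The main obstacle is entirely in the degenerate bookkeeping rather than in any deep idea. In the root-counting argument one must separate the case where a root direction is vertical, which is exactly the vanishing of the leading coefficient $a$; in the determinant argument one must verify the factorization, a routine but slightly tedious polynomial identity. The conceptual heart is simply that a nonzero binary quadratic form has at most two roots, and it is this that makes three prescribed directions incompatible with $A \neq 0$.
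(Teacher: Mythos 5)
Your proof is correct, but it takes a genuinely different route from the paper's. The paper argues intrinsically: it writes $w = \alpha u + \beta v$ with $\alpha,\beta \neq 0$, polarizes the quadratic form to deduce $u^T A v = 0$, concludes that $x^T A x = 0$ for every $x \in \R^2$, and then invokes the spectral theorem (a symmetric matrix with all real eigenvalues, one of them nonzero, has a vector $q$ with $q^T A q \neq 0$) to force $A = 0$. You instead work in coordinates, viewing the three hypotheses as a homogeneous linear system in the entries $(a,b,c)$ of $A$, and kill the system either by the root-counting fact that a nonzero binary quadratic form vanishes on at most two directions, or by the explicit factorization of the $3\times 3$ coefficient determinant into the three pairwise cross products $(u_1 v_2 - u_2 v_1)(v_1 w_2 - v_2 w_1)(u_1 w_2 - u_2 w_1)$ (which I checked is correct up to the sign and the factor of $2$ you mention). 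Your case split on $a = 0$ versus $a \neq 0$ is handled properly, and your observation that none of $u,v,w$ can be zero matches the paper's opening remark. What the two approaches buy: the paper's polarization argument is coordinate-free and generalizes with no change to $n \times n$ symmetric matrices restricted to a two-dimensional subspace, but it quietly relies on diagonalizability where a direct evaluation at $e_1$, $e_2$, and $e_1 + e_2$ would suffice; your determinant version is more elementary, makes the role of the non-proportionality hypothesis completely transparent (it is precisely the nonvanishing of the three cross-product factors), and yields a quantitative nonsingularity statement rather than just the qualitative conclusion.
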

\begin{proof}
None of $u, v, w$ is zero - if $u$ were zero, it could be written as $0 v$. Therefore, we can write $w$ in the basis $\{u, v\}$ as $w = \alpha u + \beta v$ for some real numbers $\alpha$, $\beta$. Since $w$ is not a multiple of $u$ or $v$, $\alpha \ne 0$ and $\beta \ne 0$. Then
$$0 = w^T A w = \alpha^2 u^T A u + \beta^2 v^T A v + 2 \alpha \beta u^T A v = 2 \alpha \beta u^T A v,$$
and therefore $u^T A v = 0$. Now, let $x$ be any vector in $\R^2$. Write $x$ in the basis $\{u, v\}$ as $x = s u + t v$ for some real numbers $s, t$. Then
$$x^T A x = s^2 \, u^T A u + t^2 \, v^T A v + 2 s t \, u^T A v = 0.$$

Since $A$ is symmetric, it is diagonalizable and has real eigenvalues. If $A$ were nonzero, it would have a nonzero eigenvalue $\lambda$; let $q$ be the corresponding eigenvector, and assume without loss of generality that $q$ is real (e.g., $q_1 \in \R$ and $q_2 \in \R$). Then $q^T A q = \lambda |q|^2 \ne 0$, a contradiction. So $A = 0$.
\end{proof}

\begin{lemma} \label{lemma:conic center}
Suppose $H$ is a conic section described by the equation $f(x) = 0$, where $f: \R^2 \to \R$ is a polynomial of degree $2$. A point $c \in \R^2$ is a center of $H$ if and only if the gradient of $f$ evaluates to zero at $c$.
\end{lemma}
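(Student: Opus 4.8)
The plan is to put $f$ in the standard form $f(x) = x^T A x + b^T x + d$ with $A$ symmetric, so that $\nabla f(x) = 2Ax + b$, and then to expand $f$ at the reflected pair of points $c+y$ and $c-y$. Expanding and using the symmetry of $A$ yields the two identities
\[
f(c+y) - f(c-y) = 2\,\nabla f(c)^T y, \qquad f(c+y) + f(c-y) = 2f(c) + 2\,y^T A y,
\]
which drive the entire argument. The point is that $c$ is a center of $H$ precisely when the change of variables $x = c + y$ turns membership into an even condition in $y$, i.e.\ when $f(c+y) = 0 \iff f(c-y) = 0$.

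The forward implication is immediate from the first identity: if $\nabla f(c) = 0$ then $f(c+y) = f(c-y)$ for all $y$, so the zero set of $f$ is invariant under $y \mapsto -y$ about $c$; rewriting $y = x - c$, this says exactly that $x \in H$ implies $2c - x \in H$, which is Definition~\ref{defn:center}. No degenerate-case care is needed here.

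For the converse, assume $c$ is a center. For each $x \in H$ we then have $f(c+y) = f(c-y) = 0$ with $y = x - c$, so the first identity forces $\nabla f(c)^T(x-c) = 0$ for every $x \in H$. As soon as $H$ contains two points whose displacements from $c$ are not multiples of each other, these two independent linear constraints pin down $\nabla f(c) = 0$, and we are finished.

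The main obstacle is therefore the degenerate possibility that every displacement $x-c$ is parallel to a single direction, so that $H$ lies on one line through $c$ and the displacements fail to span. Here the second identity takes over. If the center happens to lie on the conic, then $f(c) = 0$ and the second identity collapses to $(x-c)^T A (x-c) = 0$ for all $x \in H$; should $H$ contain three points with pairwise non-parallel displacements, Lemma~\ref{lemma:matrix three vectors zero} would force $A = 0$, impossible for a degree-two $f$. This is precisely the step for which Lemma~\ref{lemma:matrix three vectors zero} is designed: it confines such an $H$ to at most two lines through $c$, so that $f$ expanded about $c$ is homogeneous and $\nabla f(c) = 0$. The remaining configurations are a single (double) line, whose gradient vanishes along it, a single point, which is the zero set of a definite form with vanishing gradient at its center, and the empty set; the last is the genuine exception, since then every point is vacuously a center while $\nabla f$ vanishes at only one, which is why the statement tacitly requires $H$ to be a nonempty conic. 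I expect this degenerate bookkeeping, rather than the identity computation, to be the delicate part of the proof.
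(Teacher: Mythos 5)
Your decomposition, your two identities, and both of the \dq{easy} halves are correct: the forward direction is exactly the paper's equation \ref{eq:f reflection around c}, and your observation that two points of $H$ with independent displacements from $c$ immediately force $\nabla f(c) = 0$ is a clean (and slightly more direct) route through the generic case than the paper's argument by contradiction. The genuine gap is in the degenerate branch, where $H$ lies on a single line through $c$, and it is twofold. First, the step \dq{it confines such an $H$ to at most two lines through $c$, so that $f$ expanded about $c$ is homogeneous and $\nabla f(c) = 0$} is a non sequitur: knowing that the displacements $x - c$ lie in at most two null directions of $A$ says nothing about the linear term $(\nabla f(c))^T(x - c)$ of the expansion, and in the branch you are actually in (all displacements parallel to one direction) Lemma \ref{lemma:matrix three vectors zero} has no three pairwise non-parallel vectors to act on, so that sentence does no work. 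Second, the concluding classification --- that the only remaining configurations are a double line, a single point, or the empty set, each with the normal form you describe --- is precisely the content that still needs proving; you would have to show, for instance, that the zero set of a degree-$2$ polynomial cannot be exactly two points of that line, and that a one-point zero set forces the gradient to vanish there. Asserting these amounts to invoking the affine classification of real conics, a much bigger hammer than the lemma itself, and you give no proof of it.

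The paper closes this case without any classification, by a uniform construction you may want to adopt: assuming $\nabla f(c) \ne 0$ and $H \subseteq L$, it picks $y \in H$ with $\delta \coloneqq \nabla f(y) \ne 0$ (possible because $\nabla f(c)$ is the average of $\nabla f(y)$ and $\nabla f(2c - y)$), applies Lemma \ref{lemma:matrix three vectors zero} to $\delta$ and its rotations by $\pm\pi/3$ to obtain a direction $w$ with $w^T A w \ne 0$ and $\delta^T w > 0$, and then notes that $f(y + t w) = t^2 \, w^T A w + t \, \delta^T w$ has the nonzero root $t^* = -\delta^T w / (w^T A w)$, producing a point of $H$ off $L$ --- a contradiction. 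Until your degenerate branch is replaced by something of this kind (or by an actual proof of the classification you cite), the converse direction is not established.
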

\begin{proof}
Write $f(x)$ as
$$f(x) = x^T A x + v^T x + s$$
for some symmetric matrix $A$, vector $v$, and real number $s$. The gradient of $f$ at $c$ is $\nabla f(c) = 2 A c + v$. It is easy to verify that

\begin{equation} \label{eq:f in terms of x - c}
	f(x) = (x - c)^T A (x - c) + (\nabla f(c))^T (x - c) + f(c).
\end{equation}

Therefore,
\begin{equation} \label{eq:f reflection around c}
	f(2 c - x) = f(x) - 2 (\nabla f(c))^T (x - c).
\end{equation}

Suppose $\nabla f(c) = 0$. Equation \ref{eq:f reflection around c} then simplifies to $f(2 c - x) = f(x)$. If $x \in H$, then $f(x) = 0$, so $f(2 c - x) = 0$, so $2 c - x \in H$. By Definition \ref{defn:center}, $c$ is a center of $H$.

Suppose $\nabla f(c) \ne 0$, and suppose for the sake of contradiction that $c$ is nevertheless a center of $H$. If $x$ is a point in $H$, $f(x) = 0$; since $2 c - x$ is also in $H$, $f(2 c - x) = 0$. By equation \ref{eq:f reflection around c}, $(\nabla f(c))^T (x - c) = 0$ for every $x \in H$. The last statement is equivalent to the set containment
$$H \subseteq L, \;\;\; \text{where} \;\;\; L \coloneqq \{x \in \R^2: (\nabla f(c))^T (x - c) = 0\}.$$

Qualitatively, $L$ is the line passing through $c$ whose normal vector is $\nabla f(c)$. Let $y$ be a point in $H$, and let $\ol{y} = 2 c - y$ be the mirror image of $y$ with respect to $c$ (in this paper, we do not consider the empty set a conic, even though it is described by the equation $x^T x + 1 = 0$; so $H$ is guaranteed to have at least one point). Since $\nabla f(c)$ is by assumption nonzero and
$$\nabla f(c) = 2 A c + v = \frac{(2 A y + v) + (2 A (2 c - y) + v)}{2} = \frac{\nabla f(y) + \nabla f(\ol{y})}{2},$$
it has to be that at least one of $\nabla f(y)$ and $\nabla f(\ol{y})$ is nonzero. Assume without loss of generality that $\nabla f(y) \ne 0$.

Both curves $H$ and $L$ pass through the point $y$, and $H$ is contained in $L$. The normal vectors to $H$ and $L$ at $y$ must be multiples of each other - if that were not the case, $H$ would \dq{go at an angle} relative to $L$. Since $\nabla f(y) \ne 0$ and $H$ is the set of points $x$ where $f(x) = 0$, $\nabla f(y)$ is a normal vector to $H$ at the point $y$. By definition, $\nabla f(c)$ is a normal vector to $L$. Therefore,

$$\nabla f(y) = \alpha \: \nabla f(c)$$

for some nonzero real number $\alpha$. Denoting $\delta \coloneqq \nabla f(y)$, we can therefore describe $L$ as the line passing through $y$ whose normal vector is $\delta$:

\begin{equation} \label{eq:L in terms of delta}
	L = \{x \in \R^2: \delta^T(x - y) = 0\}.
\end{equation}

Let $\delta'$ be the result of rotating $\delta$ by $\pi/3$ radians counterclockwise, and let $\delta''$ be the result of rotating $\delta$ by $\pi/3$ radians clockwise. Consider the set $\{\delta, \delta', \delta''\}$. Since $A$ is a symmetric two-by-two matrix, by Lemma \ref{lemma:matrix three vectors zero}, $w^T A w$ being zero for every $w \in \{\delta, \delta', \delta''\}$ would imply $A = 0$, which is a contradiction because $f(x)$ is a polynomial of degree $2$. Let $w \in \{\delta, \delta', \delta''\}$ be such that $w^T A w \ne 0$. Define the function $F: \R \to \R$ as
$$F(t) \coloneqq f(y + t w).$$

We can write $F(t)$ explicitly as
\begin{align*} \begin{split}
	F(t) &= (y + t w)^T A (y + t w) + v^T (y + t w) + s\\
		&= t^2 \cdot w^T A w + t \cdot (2 A y + v)^T w + (y^T A y + v^T y + s)\\
		&= t^2 \cdot w^T A w + t \cdot \delta^T w.
\end{split} \end{align*}

Here, we use that $f(y) = 0$ and that $\nabla f(y) = 2 A y + v$. Because of how $\delta'$ and $\delta''$ were defined, we are guaranteed to have $\delta^T w > 0$. One can check that $t^* = -\frac{\delta^T w}{w^T A w} \ne 0$ is a root of $F$, e.g., $F(t^*) = 0$. This implies that the point $y + t^* w$ lies on $H$. But since $\delta^T ((y + t^* w) - y) = t^* \cdot \delta^T w \ne 0$, equation \ref{eq:L in terms of delta} dictates that $y + t^* w$ does not lie on $L$. This is a contradiction because $H \subseteq L$.
\end{proof}

\begin{corollary} \label{corr:unique center}
If the Hessian matrix of $f$ is nonsingular, $H$ has exactly one center.
\end{corollary}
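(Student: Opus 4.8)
The plan is to reduce the statement entirely to Lemma \ref{lemma:conic center} together with a one-line observation from linear algebra. Continuing to write $f(x) = x^T A x + v^T x + s$ as in the proof of that lemma, I recall that the gradient is $\nabla f(x) = 2 A x + v$, and I note that since $f$ is quadratic its Hessian is the \emph{constant} matrix $2 A$. Consequently, the hypothesis that the Hessian is nonsingular is equivalent to the assertion that $A$ is invertible (indeed $\det(2A) = 4 \det A$).

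With this translation in hand, I invoke Lemma \ref{lemma:conic center}, which says that a point $c$ is a center of $H$ if and only if $\nabla f(c) = 0$, that is, if and only if $2 A c + v = 0$. The set of centers of $H$ is therefore exactly the solution set of this linear system in the unknown $c$, whose coefficient matrix is $2 A$. Because $2 A$ is nonsingular, the system has a unique solution, namely $c = -\tfrac{1}{2} A^{-1} v$. Hence $H$ has exactly one center, and the corollary follows.

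I do not expect any genuine obstacle here: once Lemma \ref{lemma:conic center} is available, the result is essentially immediate. The only points that require any care are the identification of the Hessian of a quadratic with $2 A$ and the passage from \dq{the set of centers equals the solution set of $\nabla f(c) = 0$} to \dq{$H$ has exactly one center} via the invertibility of the system's coefficient matrix. Everything else is a routine computation.
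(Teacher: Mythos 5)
Your proof is correct and is essentially identical to the paper's own argument: both identify the Hessian with the constant matrix $2A$, use Lemma \ref{lemma:conic center} to equate centers with zeros of the gradient, and solve the resulting linear system to obtain the unique center $-\tfrac{1}{2}A^{-1}v$. No gaps.
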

\begin{proof}
The Hessian of $f$ is $2 A$. Since the Hessian is nonsingular, $A$ is nonsingular. The gradient of $f$, which is equal to $\nabla f(x) = 2 A x + v$, evaluates to zero at exactly one point, namely $-\frac{1}{2}A^{-1} v$; so $-\frac{1}{2}A^{-1} v$ is the unique center of $H$.
\end{proof}

\begin{corollary} \label{corr:description of ellipses and hyperbolas}
If $H$ is an ellipse or hyperbola and $c$ is its center, $H$ can be described as $\{x \in \R^2: F(x) = 0\}$, where $F(x) = (x - c)^T B (x - c) - 1$ for some nonsingular symmetric matrix $B$.
\end{corollary}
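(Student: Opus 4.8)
The plan is to start from the general quadratic description of $H$, use the center to eliminate the linear term, and then rule out the degenerate possibilities. Write $f(x) = x^T A x + v^T x + s$ with $A$ symmetric, as in the proof of Lemma \ref{lemma:conic center}. Since $c$ is a center of $H$, that lemma gives $\nabla f(c) = 0$, so the middle term of equation \ref{eq:f in terms of x - c} vanishes and we obtain $f(x) = (x - c)^T A (x - c) + f(c)$. Setting $f(x) = 0$ and dividing through, the natural candidate is $B \coloneqq -\frac{1}{f(c)} A$, which is symmetric because $A$ is; then $F(x) = (x - c)^T B (x - c) - 1 = 0$ is equivalent to $f(x) = 0$. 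It remains only to justify the two facts this candidate needs: that $A$ is nonsingular (so that $B$ is), and that $f(c) \ne 0$ (so that the division is legal).

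To see that $A$ is nonsingular, first note $A \ne 0$, since otherwise $f$ would have degree at most $1$. If $A$ were singular but nonzero, it would have eigenvalues $\lambda \ne 0$ and $0$; after an orthogonal change of coordinates diagonalizing $A$ and recentering at $c$, the equation $(x - c)^T A (x - c) = -f(c)$ becomes $\lambda u^2 = -f(c)$ in the new coordinate $u$, whose solution set is a pair of parallel lines, a single line, or the empty set. None of these is an ellipse or a hyperbola, so $A$ must be nonsingular.

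To see that $f(c) \ne 0$, suppose instead $f(c) = 0$. Then, with $A$ nonsingular and symmetric, $H$ is the zero set of $(x - c)^T A (x - c)$. If $A$ is (positive or negative) definite, this set is the single point $\{c\}$, and if $A$ is indefinite, it is a pair of lines crossing at $c$; again neither is an ellipse or a hyperbola. Hence $f(c) \ne 0$, the matrix $B = -\frac{1}{f(c)} A$ is a nonsingular symmetric matrix, and $F(x) = (x - c)^T B (x - c) - 1$ describes $H$, as claimed.

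The main obstacle is not any single computation but the case analysis that excludes the degenerate conics: everything hinges on knowing that an ellipse or hyperbola is a nondegenerate central conic, so that the scenarios producing parallel lines, crossing lines, a point, or the empty set cannot occur. Once those are eliminated by appealing to the geometric meaning of \dq{ellipse} and \dq{hyperbola}, the algebraic rearrangement into the form $F(x) = (x - c)^T B (x - c) - 1$ is immediate.
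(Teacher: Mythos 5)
Your proposal is correct and follows essentially the same route as the paper: the paper likewise sets $B = -\frac{1}{f(c)}A$ and uses equation \ref{eq:f in terms of x - c} together with $\nabla f(c) = 0$ to reduce $f$ to $(x - c)^T A (x - c) + f(c)$. The only difference is that the paper simply asserts that $f(c) \ne 0$ (since an ellipse or hyperbola does not contain its center) and that $A$ is nonsingular, whereas you justify both claims by an explicit case analysis of the degenerate zero sets --- a welcome amplification rather than a different argument.
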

\begin{proof}

Since ellipses and hyperbolas do not contain their center, $f(c) \ne 0$. Let
$$B \coloneqq -\frac{1}{f(c)}A.$$

Since $H$ is an ellipse or hyperbola, $A$ is nonsingular, and therefore $B$ is nonsingular. By equation \ref{eq:f in terms of x - c},
$$f(x) = (x - c)^T A (x - c) + f(c),$$
where we use that $\nabla f(c) = 0$ because $c$ is the center of $H$. Then
\begin{align*} \begin{split}
	F(x) &= (x - c)^T B (x - c) - 1\\
		&=-\frac{1}{f(c)}\lr{(x - c)^T A (x - c) + f(c)} = -\frac{1}{f(c)}f(x),
\end{split} \end{align*}
and it follows that $F(x) = 0$ if and only if $f(x) = 0$.
\end{proof}

Tangency of geometric figures is a complicated notion. For the sake of brevity we choose in this paper to use a \dq{makeshift} definition of tangency that only applies to lines and conics. We do so with the hope that every systematic notion of tangency would reduce to Definition \ref{defn:tangency} in the special case of lines and conics. Definition \ref{defn:directed along} provides background for Definition \ref{defn:tangency}.

\begin{definition} \label{defn:directed along}
Let $L$ be a line, and let $H$ be a conic. Say that $L$ is \textit{directed along} $H$ if $H$ is a hyperbola and $L$ is parallel to either of the two asymptotes of $H$, or if $H$ is a parabola and $L$ is parallel to the axis of symmetry of $H$.
\end{definition}

\begin{definition} \label{defn:tangency}
Let $L$ be a line, and let $H$ be a conic. Say that $L$ is \textit{tangent to} $H$ if either of the following holds:
\begin{enumerate}
	\item[1.] $L$ intersects $H$ at exactly one point and is not directed along $H$,
	\item[2.] $H$ is a hyperbola and $L$ is one of the two asymptotes of $H$.
\end{enumerate}
\end{definition}

Case 1 of Definition \ref{defn:tangency} corresponds to the \dq{commonsense} definition of tangency, when $L$ \dq{touches} $H$ but does not cross it. Case 2 of Definition \ref{defn:tangency} declares that the asymptotes of a hyperbola are tangent to it. Even though asymptotes never reach the hyperbola they belong to, they come \dq{infinitely close} to it - that is, no straight line can \dq{fit between} a hyperbola and either of its two asymptotes \cite[p.~124]{salmonconics}. Lemma \ref{lemma:hyperbola asymptotes} gives an algebraic description condition for whether or not a line is directed along an ellipse or hyperbola.

\begin{lemma} \label{lemma:hyperbola asymptotes}
Let $L$ be a line, and let $H$ be an ellipse or hyperbola. Parametrize $L$ as $\{u + v t: t \in \R\}$ for some vectors $u$ and $v$, and write $H$ as the set of points $x$ satisfying $(x - c)^T A (x - c) = 1$ for some nonsingular symmetric matrix $A$ and vector $c$. Then, $L$ is directed along $H$ if and only if $v^T A v = 0$.
\end{lemma}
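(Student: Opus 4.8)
The plan is to diagonalize $A$ by an orthogonal change of coordinates, reduce $H$ to a standard ellipse or hyperbola, and then read off the asymptote directions directly, comparing them with the condition $v^T A v = 0$. Since $A$ is symmetric, the spectral theorem supplies an orthogonal matrix $P$ with $P^T A P = D$, where $D$ is the diagonal matrix with entries $\lambda_1, \lambda_2$; because $A$ is nonsingular, both $\lambda_1$ and $\lambda_2$ are nonzero. Substituting $x - c = P z$ turns the equation $(x-c)^T A (x-c) = 1$ into $\lambda_1 z_1^2 + \lambda_2 z_2^2 = 1$, and it sends the direction vector $v$ of $L$ to $v' \coloneqq P^T v$ in the new coordinates. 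Since $P$ is orthogonal, the key algebraic identity is
$$v^T A v = (P^T v)^T D (P^T v) = \lambda_1 (v'_1)^2 + \lambda_2 (v'_2)^2.$$
Because $v \neq 0$ (it is the direction of a genuine line) and $P$ is invertible, we also have $v' \neq 0$. Finally, since a real ellipse or hyperbola is nonempty, the two eigenvalues are either both positive (ellipse) or of opposite signs (hyperbola).

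With this in place I would split into two cases. In the ellipse case both $\lambda_i > 0$, so $v^T A v = \lambda_1 (v'_1)^2 + \lambda_2 (v'_2)^2 > 0$, hence $v^T A v \neq 0$; on the other hand an ellipse is neither a hyperbola nor a parabola, so by Definition \ref{defn:directed along} $L$ is never directed along $H$. Both sides of the claimed equivalence are then false, so it holds vacuously. In the hyperbola case $\lambda_1$ and $\lambda_2$ have opposite signs, say $\lambda_1 > 0 > \lambda_2$, and the asymptotes of $\lambda_1 z_1^2 + \lambda_2 z_2^2 = 1$ are the lines $z_2 = \pm\sqrt{-\lambda_1/\lambda_2}\, z_1$. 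A direction vector $d$ of an asymptote therefore satisfies $\lambda_1 d_1^2 + \lambda_2 d_2^2 = 0$, and conversely any nonzero $d$ with $\lambda_1 d_1^2 + \lambda_2 d_2^2 = 0$ has slope $\pm\sqrt{-\lambda_1/\lambda_2}$ and so points along an asymptote. Thus $L$ is parallel to an asymptote exactly when $\lambda_1 (v'_1)^2 + \lambda_2 (v'_2)^2 = 0$, i.e.\ exactly when $v^T A v = 0$. Since the map $x \mapsto P^T(x - c)$ is affine, it carries the asymptotes of $H$ to the asymptotes of the standard hyperbola and preserves parallelism, so \dq{$v'$ parallel to a standard asymptote} is the same condition as \dq{$L$ directed along $H$,} completing the equivalence.

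The step I expect to be the main obstacle is not the identity $v^T A v = \lambda_1 (v'_1)^2 + \lambda_2 (v'_2)^2$, which is routine, but the bookkeeping around the change of coordinates: I must verify that the orthogonal substitution genuinely sends the asymptotes of $H$ to those of the standard-form hyperbola and preserves the \dq{parallel to an asymptote} relation, and I must pin down precisely the sign conditions on $\lambda_1, \lambda_2$ that separate an ellipse from a hyperbola (and both from the empty locus, which we exclude as not being a conic). Getting these correspondences exactly right — rather than the final algebra — is where the care is needed.
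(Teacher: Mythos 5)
Your proof is correct and follows essentially the same route as the paper: diagonalize the symmetric matrix $A$, dispose of the ellipse case by positive-definiteness (both sides of the biconditional false), and in the hyperbola case identify the asymptote directions with the null directions of the quadratic form $v \mapsto v^T A v$. The only real difference is in how that last identification is justified --- the paper produces two null vectors $v_{(1)}, v_{(2)}$ and verifies directly that the lines through $c$ in those directions are the asymptotes by parametrizing $H$ as $c + v_{(1)} t + v_{(2)} t^{-1}$ and examining the limits $t \to 0, \pm\infty$, whereas you pass to standard form $\lambda_1 z_1^2 + \lambda_2 z_2^2 = 1$ and invoke the known asymptotes $z_2 = \pm\sqrt{-\lambda_1/\lambda_2}\, z_1$ together with the affine-invariance of asymptotes and parallelism (which is available as part 3 of Proposition \ref{prop:affine trans properties}); both justifications are at a comparable level of rigor.
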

\begin{proof}
If $H$ is an ellipse, both sides of the biconditional \dq{$L$ is directed along $H$ if and only if $v^T A v = 0$} are false. Indeed, the condition for $L$ being directed along $H$ fails automatically because $H$ is not a hyperbola or parabola, and $v^T A v \ne 0$ because $A$ is positive-definite and $v$ is nonzero.

Suppose $H$ is a hyperbola. Then $\det{A} < 0$, so $A$ has one positive eigenvalue and one negative eigenvalue. By diagonalizing $A$, one can produce two vectors $v_{(1)}$ and $v_{(2)}$ such that
$$v_{(1)}^T A v_{(1)} = 0, \;\;\; v_{(2)}^T A v_{(2)} = 0, \;\;\;  v_{(1)}^T A v_{(2)} = 1/2.$$

If $x$ is a multiple of $v_{(1)}$ or $v_{(2)}$, then $x^T A x = 0$. Since $A$ is a nonzero two-by-two symmetric matrix, by Lemma \ref{lemma:matrix three vectors zero}, the converse is also true: if $x^T A x = 0$, then $x$ is a multiple of $v_{(1)}$ or $v_{(2)}$. Define $L_{(1)}$ and $L_{(2)}$ to be lines consisting of multiples of $v_{(1)}$ and $v_{(2)}$, plus $c$:
$$
	L_{(1)} \coloneqq \{c + v_{(1)} t: t \in \R\}, \;\;\;\;\; L_{(2)} \coloneqq \{c + v_{(2)} t: t \in \R\}.
$$

We claim that $L_{(1)}$ and $L_{(2)}$ are the two asymptotes of $H$. Indeed, let $x \in \R^2$. Write $x - c$ in the basis $\{v_{(1)}, v_{(2)}\}$ as $x - c = \alpha \: v_{(1)} + \beta \: v_{(2)}$ for some $\alpha, \beta \in \R$. Then
$$(x - c)^T A (x - c) = \alpha^2 \: v_{(1)}^T A v_{(1)} + \beta^2 \: v_{(2)}^T A v_{(2)} + 2 \alpha \beta \: v_{(1)}^T A v_{(2)} = \alpha \beta.$$

So $x \in H$ if and only if $\alpha \beta = 1$. This allows us to parametrize $H$ as
$$H = \{c + v_{(1)} t + v_{(2)} \frac{1}{t}: t \in \R, t \ne 0\}.$$

As $t \to 0$, $c + v_{(1)} t + v_{(2)} \frac{1}{t} \approx c + v_{(2)} \frac{1}{t}$ gets closer and closer to $L_{(2)}$, and as $t \to \pm \infty$, $c + v_{(1)} t + v_{(2)} \frac{1}{t} \approx c + v_{(1)} t$ gets closer and closer to $L_{(1)}$. So $L_{(1)}$ and $L_{(2)}$ are the two asymptotes of $H$.

The line $L = \{u + v t: t \in \R\}$ is directed along $H$ if and only if it is parallel to $L_{(1)}$ or $L_{(2)}$, which if and only if $v$ is a multiple of $v_{(1)}$ or $v_{(2)}$, which is if and only if $v^T A v = 0$.
\end{proof}

Finally, we are ready to give an algebraic description of tangency, Lemma \ref{lemma:conic line tangency condition}. We initially discovered Lemma \ref{lemma:conic line tangency condition} in the special case of ellipses using Lagrange multipliers - if $n$ is a normal vector to $L$, we computed the point on $H$ that had the greatest dot product with $n$; if this dot product was equal to $b$, we reasoned that $H$ and $L$ must be tangent. That was not a complete proof, but it was simple and clear. Below we give a formal proof of Lemma \ref{lemma:conic line tangency condition}.

\begin{lemma}[Tangency condition for lines and conics] \label{lemma:conic line tangency condition}
Let $L$ be a line, and let $H$ be an ellipse or hyperbola. Write $L$ as the set of points $x$ satisfying $n^T x = b$ for some nonzero vector $n$ and real number $b$, and write $H$ as the set of points $x$ satisfying $(x - c)^T A (x - c) = 1$ for some nonsingular symmetric matrix $A$ and vector $c$. The line $L$ is tangent to the conic $H$ (according to Definition \ref{defn:tangency}) if and only if $n^T A^{-1} n = (b - n^T c)^2$.
\end{lemma}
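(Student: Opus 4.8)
The plan is to translate $H$ to the origin, parametrize $L$, and substitute, thereby reducing tangency to the root structure of a single quadratic. Setting $y = x - c$ and $b' = b - n^T c$ turns $H$ into $\{y : y^T A y = 1\}$ and $L$ into $\{y : n^T y = b'\}$; since translation is affine it preserves both the figures and the relation of tangency, so it suffices to prove that $L$ is tangent to $H$ if and only if $n^T A^{-1} n = b'^2$. I would then parametrize $L$ as $\{u + v t : t \in \R\}$ with $n^T v = 0$ and $n^T u = b'$, and substitute into $y^T A y = 1$. Using the symmetry of $A$, the intersection points correspond to the roots of
$$q(t) = (v^T A v)\, t^2 + 2(u^T A v)\, t + (u^T A u - 1).$$

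By Lemma \ref{lemma:hyperbola asymptotes}, $L$ is directed along $H$ exactly when $v^T A v = 0$; when $H$ is an ellipse this never happens (as $A$ is definite), so the asymptote clause of Definition \ref{defn:tangency} is relevant only for hyperbolas. In the principal case $v^T A v \ne 0$, the polynomial $q$ is genuinely quadratic, so $L$ is not directed along $H$ and meets $H$ in exactly one point precisely when $q$ has a repeated root, i.e. when its discriminant vanishes:
$$(u^T A v)^2 = (v^T A v)(u^T A u - 1), \qquad \text{equivalently} \qquad (u^T A u)(v^T A v) - (u^T A v)^2 = v^T A v.$$

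The key step, and the part I expect to require the most care, is to identify both sides of this equation in closed form. Letting $P$ be the $2 \times 2$ matrix with columns $u$ and $v$, the left-hand side is $\det(P^T A P) = (\det P)^2 \det A$. For the right-hand side I would invoke the planar adjugate identity: choosing the explicit perpendicular $v = (n_2, -n_1)$, a one-line computation gives $v^T A v = n^T \operatorname{adj}(A)\, n = (\det A)\, n^T A^{-1} n$, and pairing this with $u = \frac{b'}{|n|^2}\, n$ gives $\det P = -b'$. Because the number of intersection points is a geometric quantity independent of the parametrization, these convenient choices lose no generality. Substituting, the discriminant condition becomes $(\det A)\, b'^2 = (\det A)\, n^T A^{-1} n$, and cancelling the nonzero factor $\det A$ yields exactly $n^T A^{-1} n = b'^2$.

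Finally I would dispatch the directed-along case $v^T A v = 0$, where $H$ must be a hyperbola. The same adjugate identity forces $n^T A^{-1} n = 0$, so the target equation collapses to $b' = 0$. On the geometric side, since such an $L$ is directed along $H$, clause 1 of Definition \ref{defn:tangency} cannot apply, so $L$ is tangent if and only if it is one of the asymptotes; and a line parallel to an asymptote is an asymptote precisely when it passes through the center (the origin), i.e. when $b' = 0$. Thus in this case too, tangency is equivalent to $n^T A^{-1} n = b'^2$, and recalling that $b' = b - n^T c$ completes the proof.
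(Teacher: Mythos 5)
Your proposal is correct and follows essentially the same route as the paper: parametrize $L$ as $\{u+vt\}$, reduce tangency to the vanishing of the discriminant of a quadratic in $t$, and handle the directed-along case separately by observing that a line parallel to an asymptote of a hyperbola is tangent iff it passes through the center. The only real difference is in the bookkeeping: you identify the discriminant via $\det(P^T A P) = (\det P)^2 \det A$ and the adjugate identity $v^T A v = n^T \operatorname{adj}(A)\, n = (\det A)\, n^T A^{-1} n$, whereas the paper reaches the same cancellation of $\det A$ through the rotation-matrix identities $v n^T - n v^T = |n|^2 \sigma$ and $\sigma^T A \sigma = (\det A) A^{-1}$; both computations are valid and yours is arguably the cleaner of the two.
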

\begin{proof}
Let $\sigma$ be the counterclockwise 90-degree rotation matrix,
$$\sigma \coloneqq \begin{pmatrix}
0 & -1\\
1 & 0
\end{pmatrix},$$
so that the cross product $x \times y \coloneqq x_1 y_2 - x_2 y_1$ of two vectors $x$ and $y$ could be expressed as $x \times y = y^T \sigma x$. Since $n$ is a normal vector to $L$, the vector $v \coloneqq \sigma n$ points in the direction of $L$. Observe that
\begin{equation} \label{eq:tangency vn^T - n v^T}
v n^T - n v^T = \sigma n n^T + n n^T \sigma = \begin{pmatrix}
0 & -n_1^2 - n_2^2\\
n_1^2 + n_2^2 & 0
\end{pmatrix} = |n|^2 \sigma.
\end{equation}

Also, note the identity
\begin{equation}
\label{eq:tangency sAs}
\sigma^T A \sigma = \begin{pmatrix}
A_{22} & -A_{12}\\
-A_{12} & A_{11}
\end{pmatrix} = (\det{A}) A^{-1},
\end{equation}
where $A_{12} = A_{21}$ because $A$ is symmetric. 

To prove that $L$ is tangent to $H$ if and only if $n^T A^{-1} n = (b - n^T c)^2$, note that either $L$ is directed along $H$ or it is not. W first prove the biconditional in the case when $L$ is not directed along $H$ and then prove it in the case when $L$ is directed along $H$.

When $L$ is not directed along $H$, by Definition \ref{defn:tangency}, $L$ is tangent to $H$ if and only if $L$ intersects $H$ at exactly one point. We can parametrize $L$ as
$$L = \{u + v t: t \in \R^2\},$$
where $u = c + \frac{b - n^T c}{|n|^2} n$ (and $v = \sigma n$, as previously defined). For $t \in \R$, let
$$f(t) \coloneqq (u + v t - c)^T A (u + v t - c) - 1,$$
so that $u + v t$ lies on $H$ if and only if $f(t) = 0$. The number of times $L$ intersects $H$ is precisely the number of solutions to $f(t) = 0$. An explicit formula for $f(t)$ is
$$f(t) = t^2 \cdot v^T A v + t \cdot 2 v^T A (u - c) + \lr{(u - c)^T A (u - c) - 1}.$$

By Lemma \ref{lemma:hyperbola asymptotes}, the fact $L$ is not directed along $H$ implies that $v^T A v \ne 0$; therefore, $f(t)$ is a polynomial of degree $2$. The quadratic equation $f(t) = 0$ has exactly one solution if and only if the discriminant is zero. That is, if
$$D = \lr{2 v^T A (u - c)}^2 - 4 v^T A v \lr{(u - c)^T A (u - c) - 1} = 0.$$

We now embark to find a convenient formula for $D/4$:
\begin{align*} \begin{split}
	\frac{D}{4} &= \lr{v^T A (u - c)}^2 - \lr{(u - c)^T A (u - c) - 1} v^T A v\\
				&= \frac{(b - n^T c)^2}{|n|^4} (v^T A n)^2 - \frac{(b - n^T c)^2}{|n|^4} n^T A n v^T A v + v^T A v\\
				&= \frac{(b - n^T c)^2}{|n|^4} v^T A n v^T A n - \frac{(b - n^T c)^2}{|n|^4} v^T A v n^T A n + v^T A v\\
				&= \frac{(b - n^T c)^2}{|n|^4} v^T A \lr{n v^T - v n^T} A n + v^T A v.
\end{split} \end{align*}

(We use that $u = c + \frac{b - n^T c}{|n|^2} n$.) Using equation \ref{eq:tangency vn^T - n v^T} and $v = \sigma n$, we can rewrite this as
$$\frac{D}{4} = -\frac{(b - n^T c)^2}{|n|^2} n^T \sigma^T A \sigma A n + n^T \sigma^T A \sigma n.$$

Using equation \ref{eq:tangency sAs}, we arrive at the following relation:
$$\frac{D}{4} = \lr{n^T A^{-1} n - (b - n^T c)^2} (\det{A}).$$

Since $\det{A}$ is nonzero, $D/4$ is zero if and only if $n^T A^{-1} n = (b - n^T c)^2$. So
$$\text{$L$ is tangent to $H$} \iff |L \cap H| = 1 \iff \frac{D}{4} = 0 \iff n^T A^{-1} n = (b - n^T c)^2.$$

This completes the proof in the case when $L$ is not directed along $H$. Now suppose $L$ is directed along $H$. By Lemma \ref{lemma:hyperbola asymptotes}, $v^T A v = 0$, so by equation \ref{eq:tangency sAs}, $n^T A^{-1} n = 0$. Now, $H$ is a hyperbola and $L$ is parallel to one of its two asymtotes; let $L^*$ be the asymptote of $H$ to which $L$ is parallel. Since $c$ is the center of $H$, $L^*$ passes through $c$. The line $L$ may or may not pass through $c$. Since $L$ is parallel to $L^*$, it follows that $L$ equals $L^*$ if and only if $c \in L$, which is if and only if $b - n^T c = 0$. So $L$ is tangent to $H$ if and only if $b - n^T c = 0$. Since $n^T A^{-1} n = 0$, $b - n^T c = 0$ if and only if $(b - n^T c)^2 = n^T A^{-1} n$.
\end{proof}
\begin{corollary}
Let $p, q$ be distinct points in the plane. The line passing through $p$ and $q$ is tangent to the unit circle if and only if $(p \times q)^2 = |p - q|^2$.
\end{corollary}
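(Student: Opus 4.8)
The plan is to apply Lemma~\ref{lemma:conic line tangency condition} directly, taking $H$ to be the unit circle and $L$ the line through $p$ and $q$. First I would observe that the unit circle is an ellipse: it is exactly the set of $x$ with $(x - 0)^T I (x - 0) = 1$, so in the notation of the lemma we have $A = I$ (which is nonsingular and symmetric) and $c = 0$. The tangency criterion $n^T A^{-1} n = (b - n^T c)^2$ therefore collapses to $|n|^2 = b^2$, and the entire task reduces to writing the line through $p$ and $q$ in the form $n^T x = b$ and reading off $n$ and $b$.

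Next I would produce that normal form using the cross-product identity $x \times y = y^T \sigma x$ established in the proof of the lemma. A point $x$ lies on the line through $p$ and $q$ if and only if $x - p$ is a multiple of $q - p$, that is, if and only if $(x - p) \times (q - p) = 0$. Rewriting this as $(q - p)^T \sigma (x - p) = 0$ and moving the constant term to the right-hand side gives
$$\lr{\sigma^T (q - p)}^T x = (q - p)^T \sigma p,$$
so I may take $n = \sigma^T (q - p)$ and $b = (q - p)^T \sigma p = p \times (q - p) = p \times q$, the last equality because $p \times p = 0$. Since $p \ne q$, the vector $n$ is nonzero, as the lemma requires.

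Finally I would substitute into the reduced criterion. Because $\sigma$ is orthogonal it preserves lengths, so $|n|^2 = |\sigma^T (q - p)|^2 = |q - p|^2 = |p - q|^2$, while $b^2 = (p \times q)^2$ by construction. The condition $|n|^2 = b^2$ from the first step is thus literally $|p - q|^2 = (p \times q)^2$, which is the claimed equivalence.

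I expect the only real obstacle to be the bookkeeping in the second step: correctly converting the geometric description of the line into the algebraic form $n^T x = b$ and verifying that the resulting constant simplifies to the clean cross product $p \times q$. Everything else is a mechanical specialization of Lemma~\ref{lemma:conic line tangency condition}, and it is precisely the orthogonality of $\sigma$ that makes the normal vector's length equal to $|p - q|$.
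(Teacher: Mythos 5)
Your proposal is correct and follows essentially the same route as the paper: both specialize Lemma~\ref{lemma:conic line tangency condition} to $A = I$, $c = 0$, extract the normal vector of the line through $p$ and $q$ via the rotation $\sigma$, and use orthogonality of $\sigma$ to get $|n|^2 = |p-q|^2$. The only cosmetic difference is that you take $n = \sigma^T(q-p)$ and $b = p \times q$ where the paper takes $n = \sigma(q-p)$ and $b = q \times p$; since both quantities are squared, this sign discrepancy is immaterial.
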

\begin{proof}
The unit circle is the set of points $x$ satisfying $x^T I x = 1$, where $I$ is the identity matrix. The line passing through $p$ and $q$ is the set of points $x$ satisfying $(q - p) \times x = q \times p$; by the identity $(q - p) \times x = x^T \sigma (q - p)$, we have $\sigma(q - p)$ is a normal vector to this line. By Lemma \ref{lemma:conic line tangency condition}, the line and the circle are tangent if and only if $(q \times p)^2 = (\sigma(q - p))^T (\sigma(q - p))$. Since $\sigma^T \sigma = I$, $(\sigma(q - p))^T (\sigma(q - p)) = (q - p)^T (q - p)$. This completes the proof.
\end{proof}

The proof of the following proposition is left to the reader.

\begin{proposition} \label{prop:affine trans properties}
Let $\phi$ be an affine transformation. Let $p, q, r$ be points, let $L$ be a line, and let $H$ be a conic section. Then
\begin{enumerate}
	\item[1.] $\phi$ is invertible and $\phi^{-1}$ is an affine transformation.
	\item[2.] $\phi(L)$ is a line.
	\item[3.] $\phi(H)$ is a conic section of the same kind as $H$. If $c$ is a center of $H$, $\phi(c)$ is a center of $\phi(H)$. If $H$ is a hyperbola and $L_1$, $L_2$ are its asymptotes, $\phi(L_1)$, $\phi(L_2)$ are the asymptotes of $\phi(H)$.
	\item[4.] $\phi(p)$, $\phi(q)$, $\phi(r)$ lie on a line if and only if $p, q, r$ lie on a line; $\phi(r)$ is the midpoint of $\phi(p)$ and $\phi(q)$ if and only if $r$ is the midpoint of $p$ and $q$.
	\item[5.] $\phi(L)$ is tangent to $\phi(H)$ if and only if $L$ is tangent to $H$.
\end{enumerate}
\end{proposition}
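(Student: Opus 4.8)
The plan is to verify the five properties in order, reducing each to elementary linear algebra and to the lemmas already established; property 5 carries essentially all of the difficulty. Throughout, write $\phi(x) = Ax + b$ with $A$ invertible. Property 1 is immediate: solving $y = Ax + b$ gives $\phi^{-1}(y) = A^{-1}y - A^{-1}b$, which is affine with invertible matrix $A^{-1}$. For property 2, parametrize $L = \{u + vt : t \in \R\}$ with $v \ne 0$; then $\phi(L) = \{(Au + b) + (Av)t : t \in \R\}$, and $Av \ne 0$ since $A$ is invertible, so $\phi(L)$ is a genuine line. For property 4, the midpoint statement reduces to the identity $\phi\lr{\tfrac12 p + \tfrac12 q} = \tfrac12\phi(p) + \tfrac12\phi(q)$, which holds because $b = \tfrac12 b + \tfrac12 b$, so affine combinations with coefficients summing to $1$ are preserved; injectivity of $\phi$ then gives both directions of the biconditional, and the ``lie on a line'' claim follows from property 2 applied to $\phi$ and to $\phi^{-1}$.

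For property 3 I would treat the three assertions in turn. Writing $H = \{x : f(x) = 0\}$ with $f(x) = x^T \tilde A x + v^T x + s$, we have $\phi(H) = \{y : (f\circ\phi^{-1})(y) = 0\}$, where $f\circ\phi^{-1}$ is again a degree-$2$ polynomial whose quadratic part has matrix $A' = (A^{-1})^T \tilde A A^{-1}$; concretely, a centered ellipse or hyperbola $(x-c)^T\tilde A(x-c)=1$ transforms into $(y-\phi(c))^T A'(y-\phi(c))=1$. Since $A'$ is congruent to $\tilde A$, Sylvester's law of inertia shows it has the same rank and signature, and as ellipse, hyperbola, and parabola are distinguished by the signature and rank of the quadratic-form matrix, the kind of conic is preserved. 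That centers map to centers is cleanest straight from Definition \ref{defn:center}: if $c$ is a center of $H$ and $y = \phi(x) \in \phi(H)$, then $2\phi(c) - y = A(2c - x) + b = \phi(2c - x) \in \phi(H)$. The asymptote claim I defer to property 5, since it is most naturally read off from the directed-along invariance established there.

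Property 5 is the crux. The strategy is to show that every ingredient of Definition \ref{defn:tangency} is an affine invariant. First, because $\phi$ is a bijection, $|L \cap H| = |\phi(L) \cap \phi(H)|$, so the ``exactly one intersection point'' condition transfers verbatim. Second, ``directed along'' is preserved: when $H$ is a hyperbola with matrix $\tilde A$, Lemma \ref{lemma:hyperbola asymptotes} says the line $L$ with direction $v$ is directed along $H$ iff $v^T \tilde A v = 0$, and the identity $(Av)^T A' (Av) = v^T A^T (A^{-1})^T \tilde A A^{-1} A v = v^T \tilde A v$ shows this is equivalent to $\phi(L)$, of direction $Av$, being directed along $\phi(H)$, of matrix $A'$; when $H$ is a parabola one argues identically using that the axis direction is the null direction of the quadratic-form matrix, together with $\ker A' = A(\ker \tilde A)$. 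Third, ``$H$ is a hyperbola'' is preserved by property 3, and an asymptote is exactly a line through the center that is directed along $H$, as read off from the parametrization in Lemma \ref{lemma:hyperbola asymptotes}; hence case 2 transfers because $\phi$ sends the center of $H$ to the center of $\phi(H)$ and preserves both line membership and the directed-along relation. Assembling these, $L$ realizes case 1 (respectively case 2) of Definition \ref{defn:tangency} for $H$ if and only if $\phi(L)$ realizes case 1 (respectively case 2) for $\phi(H)$. As a byproduct, the deferred asymptote claim of property 3 follows: $\phi(L_1), \phi(L_2)$ are distinct lines through the center of $\phi(H)$ that are directed along it, hence its two asymptotes.

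The main obstacle I anticipate is property 5, and within it the verification that ``directed along'' is genuinely affine-invariant: the hyperbola case rests on the matrix identity above, but the parabola case is transparent only once one identifies the parabola's axis direction with the null direction of its quadratic form and checks that this null direction transforms by $A$. Everything else is routine linear algebra and bookkeeping against Definitions \ref{defn:center} and \ref{defn:tangency}.
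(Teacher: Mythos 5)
The paper does not actually prove this proposition --- it states \emph{``The proof of the following proposition is left to the reader''} --- so there is no argument of the paper's to compare yours against. Your proposal is a correct and essentially complete way to fill that gap: the reduction of part 5 to (i) bijectivity preserving $|L\cap H|$, (ii) the congruence identity $(Av)^T\bigl((A^{-1})^T\tilde A A^{-1}\bigr)(Av)=v^T\tilde A v$ making ``directed along'' affine-invariant via Lemma \ref{lemma:hyperbola asymptotes}, and (iii) the characterization of asymptotes as the lines through the center that are directed along $H$, is exactly the right skeleton, and it correctly delivers the deferred asymptote claim of part 3 as a byproduct.

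One spot to tighten in part 3: the signature of the $2\times 2$ quadratic part alone does not separate the nondegenerate conics from the degenerate ones with the same quadratic part (a hyperbola versus two crossing lines, a parabola versus a pair of parallel lines or a double line, an ellipse versus a point or the empty set). Either extend the congruence argument to the full $3\times 3$ bordered matrix, which transforms by the congruence $P^T M P$ with $P=\begin{pmatrix}A^{-1} & -A^{-1}b\\ 0 & 1\end{pmatrix}$ and hence has preserved rank and signature, or observe that $\phi$ is a homeomorphism of the plane and that, together with your part 2 and part 4, this rules out each degenerate image case directly. This is a one-sentence repair, not a flaw in the approach.
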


\section{Where the magic happens} \label{section:theorems for square}
In this section, we will prove Conjectures \ref{conj:codeparade} and \ref{conj:centers of passing conics} in the special case of quadrilaterals three of whose vertices are $(0, 0)$, $(1, 0)$, and $(0, 1)$; the general case can be reduced to this special case via an affine transformation.

\begin{figure}[h]
\caption{Illustration of Theorem \ref{thm:theorem_1_square}}
\centering
\includegraphics[width=0.9\textwidth]{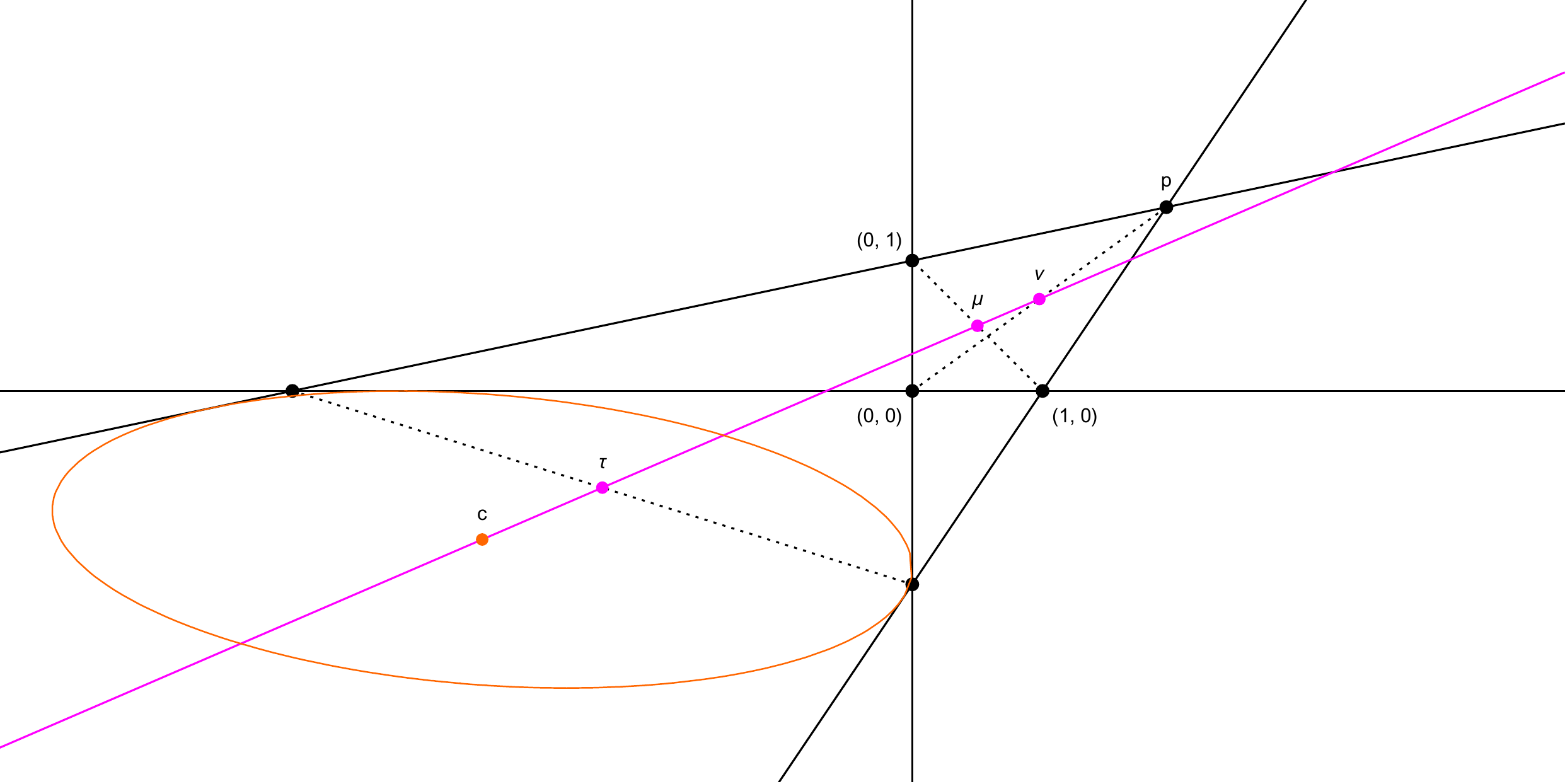} \label{fig:theorem 1 square illustration}
\end{figure}

\begin{theorem} \label{thm:theorem_1_square}
Suppose $Q$ is a quadrilateral (not necessarily simple or convex) whose vertices, listed in order, are the points $(0, 0)$, $(1, 0)$, $p$, $(0, 1)$, where $p \in \R^2$. Suppose also that $Q$ is not a trapezoid and that no three vertices of $Q$ lie on a line. Let $\mu = (1/2, 1/2)$ be the midpoint of the diagonal connecting $(1, 0)$ and $(0, 1)$, and let $\nu = (p_1/2, p_2/2)$ be the midpoint of the diagonal connecting $(0, 0)$ and $p$. Let $\tau = (\frac{1}{2}\frac{p_1}{1 - p_2}, \frac{1}{2}\frac{p_2}{1 - p_1})$ be the midpoint of the line segment connecting the points of intersection of the opposite sides of $Q$ (see the Figure \ref{fig:theorem 1 square illustration}). There exists a line $L$ with the following properties:
\begin{enumerate}
	\item[1.] The points $\mu$, $\nu$, $\tau$ lie on $L$.
	\item[2.] If $c$ is the center of some ellipse or hyperbola tangent to the four extended sides of $Q$, then $c$ lies on $L$.
	\item[3.] Every point of $L$, except the three points $\mu$, $\nu$, $\tau$, is the center of some ellipse or hyperbola tangent to the four extended sides of $Q$.
\end{enumerate}
\end{theorem}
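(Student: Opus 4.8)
The plan is to convert the four tangency requirements into algebraic equations via Lemma \ref{lemma:conic line tangency condition} and then study the resulting system. By Corollary \ref{corr:description of ellipses and hyperbolas}, any ellipse or hyperbola with center $c = (c_1, c_2)$ can be written as $(x - c)^T B (x - c) = 1$ for a nonsingular symmetric $B$; I would set $M := B^{-1} = \begin{pmatrix} a & h \\ h & d \end{pmatrix}$, which is exactly the matrix appearing in Lemma \ref{lemma:conic line tangency condition}. The four extended sides are the $x$-axis, the $y$-axis, the line through $(1,0)$ and $p$, and the line through $p$ and $(0,1)$, which I would write as $n^T x = b$ with $n^{(1)} = (0,1), b^{(1)} = 0$; $n^{(4)} = (1,0), b^{(4)} = 0$; $n^{(2)} = (p_2, 1 - p_1), b^{(2)} = p_2$; and $n^{(3)} = (1 - p_2, p_1), b^{(3)} = p_1$. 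Feeding the two axes into Lemma \ref{lemma:conic line tangency condition} yields immediately $d = c_2^2$ and $a = c_1^2$.

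Next I would substitute $a = c_1^2$ and $d = c_2^2$ into the tangency conditions of the other two sides. After expanding and cancelling, and dividing by the factors $p_1$ and $p_2$ (nonzero by the no-three-on-a-line hypothesis), each of these two conditions becomes \emph{linear} in the single unknown $h$. Eliminating $h$ between them gives one equation in $c_1, c_2$ alone; the key algebraic observation is that the $c_1 c_2$ terms cancel, leaving the \emph{linear} equation $2(1 - p_2)c_1 - 2(1 - p_1)c_2 = p_1 - p_2$ (using $p_1(1-p_1) - p_2(1-p_2) = (p_1 - p_2)(1 - p_1 - p_2)$ and dividing by $1 - p_1 - p_2 \neq 0$, which is the non-trapezoid/non-degeneracy hypothesis). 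I define $L$ by this equation. This derivation proves part 2 at once, since every center of a tangent conic must satisfy all four conditions. Part 1 is then a direct substitution check that $\mu$, $\nu$, and $\tau$ each satisfy the equation of $L$.

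For the converse (part 3), which I expect to be the crux, I would parametrize $L$ by $c = \mu + t\,(1 - p_1,\, 1 - p_2)$, so that $c_1 = \tfrac12 + t(1 - p_1)$ and $c_2 = \tfrac12 + t(1 - p_2)$. Given such a $c$, I set $a = c_1^2$, $d = c_2^2$, and take $h$ to be the value forced by the tangency condition of side $n^{(2)}$; because $c$ lies on $L$, the two equations for $h$ agree, so this single matrix $M$ satisfies all four tangency conditions simultaneously. It remains to show that $M$ is nonsingular (so that $B = M^{-1}$ exists and the conic is non-degenerate) precisely away from the three exceptional points. I would compute $h$ explicitly as a quadratic in $t$ and then factor the determinant; the heart of the argument is the identity
\begin{equation*}
\det M = c_1^2 c_2^2 - h^2 = (c_1 c_2 - h)(c_1 c_2 + h) = \lr{t + \tfrac12}\, t \,\bigl[\,2(1 - p_1)(1 - p_2)\,t + (1 - p_1 - p_2)\,\bigr].
\end{equation*}
Its three roots $t = -\tfrac12$, $t = 0$, and $t = -\tfrac{1 - p_1 - p_2}{2(1 - p_1)(1 - p_2)}$ are exactly the parameter values of $\nu$, $\mu$, and $\tau$, and they are pairwise distinct under the standing hypotheses. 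Hence for $c \in L \setminus \{\mu, \nu, \tau\}$ the matrix $M$ is nonsingular.

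Finally I would argue that the non-degenerate conic so produced is genuinely an ellipse or hyperbola tangent to all four sides. Since $M_{11} = c_1^2 \geq 0$, the matrix $M$ can never be negative definite, so $B = M^{-1}$ is never negative definite and the solution set of $(x-c)^T B (x - c) = 1$ is nonempty; it is an ellipse when $\det M > 0$ (then $M$, hence $B$, is positive definite) and a hyperbola when $\det M < 0$. Lemma \ref{lemma:conic center} (equivalently Corollary \ref{corr:unique center}) guarantees that $c$ is its center, and Lemma \ref{lemma:conic line tangency condition} guarantees tangency to each of the four sides, completing part 3. The main obstacle I anticipate is the determinant factorization above, both carrying out the simplification cleanly and recognizing that its three roots coincide with $\mu, \nu, \tau$; everything else reduces to routine linear algebra and substitution.
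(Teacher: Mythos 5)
Your proposal is correct and follows essentially the same route as the paper's proof: derive $M_{11}=c_1^2$, $M_{22}=c_2^2$ from the two axes, eliminate the off-diagonal entry between the remaining two tangency conditions to obtain a linear equation in $c$, and for the converse parametrize $L$, build the candidate matrix, and factor its determinant so that its three roots are exactly the parameters of $\mu$, $\nu$, $\tau$ (your factorization agrees with the paper's up to the reparametrization $t\mapsto -t/2$). Your closing observation that $M_{11}=c_1^2\ge 0$ rules out negative definiteness, so the constructed conic is nonempty, is a small point the paper leaves to the reader; otherwise the two arguments coincide.
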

\begin{proof}
Note that since no three points of $Q$ lie on a line, $p_1 \ne 0$ and $p_2 \ne 0$. Since $Q$ is not a trapezoid, $p_1 \ne 1$ and $p_2 \ne 1$.

Since $Q$ is not a trapezoid, it is not a parallelogram, so the midpoints $\mu, \nu$ of its diagonals are distinct. Let $L$ be the line passing through $\mu$ and $\nu$:
\begin{equation} \label{eq:thm_1_square L}
	L \coloneqq \{x \in \R^2: (x - \mu) \times (\nu - \mu) = 0\}.
\end{equation}
Here, $x \times y$  is the cross product of two vectors $x, y \in \R^2$, defined as $x \times y \coloneqq x_1 y_2 - x_2 y_1$. The fact that $\tau$ lies on $L$ can be verified by substituting $\tau$ into equation \ref{eq:thm_1_square L} and referencing the definitions of $\mu$, $\nu$, $\tau$.

An equation of the form $n^T x = b$ gives the line $\{x \in \R^2: n^T x = b\}$. The four extended sides of $Q$ are given by the following four equations:
\begin{equation} \label{eq:thm_1_square horizontal line}
\begin{aligned}[c]
	& \column{1}{0}^T x = 0,\\
	& \column{0}{1}^T x = 0,\\
\end{aligned}
\;\;\;\;\;
\begin{aligned}[c]
	& \column{1 / (2 \tau_1)}{1}^T x = 1,\\
	& \column{1}{1 / (2 \tau_2)}^T x = 1.
\end{aligned}
\end{equation}

Suppose $K$ is an ellipse or hyperbola that is tangent to the four extended sides of $Q$. By Corollary \ref{corr:description of ellipses and hyperbolas}, $K$ can be written as
\begin{equation}
	K = \{x \in \R^2: (x - c)^T A (x - c) = 1\},
\end{equation}
for some nonsingular symmetric matrix $A$ and vector $c$. Clearly, $c$ is the center of $K$. We wish to show that $c$ lies on $L$.

For two distinct points $x, y$ in the plane, denote by $x\#y$ the line that passes through $x$ and $y$. By Lemma \ref{lemma:conic line tangency condition}, the condition that $K$ is tangent to lines $(0, 0)\#(0, 1)$ and $(0, 0)\#(1, 0)$ is
\begin{equation} \label{eq:thm_1_square Ainv 11, 22}
	A^{-1}_{11} = c_1^2, \;\;\; A^{-1}_{22} = c_2^2.
\end{equation}

The condition that $K$ is tangent to lines $(0, 1)\#p$ and $(1, 0)\#p$ can be written as
\begin{align} \begin{split} \label{eq:thm_1_square tangency condition for p lines}
	& (A^{-1}_{22} - c_2^2) + \frac{A^{-1}_{11} - c_1^2}{4 \tau_1^2} + \frac{A^{-1}_{12}}{\tau_1} = 1 - 2 c_2 - \frac{c_1}{\tau_1} + \frac{c_1 c_2}{\tau_1},\\
	& (A^{-1}_{11} - c_1^2) + \frac{A^{-1}_{22} - c_2^2}{4 \tau_2^2} + \frac{A^{-1}_{12}}{\tau_2} = 1 - 2 c_1 - \frac{c_2}{\tau_2} + \frac{c_1 c_2}{\tau_2}.
\end{split} \end{align}

Combining equations \ref{eq:thm_1_square Ainv 11, 22} and \ref{eq:thm_1_square tangency condition for p lines}, we get
\begin{align} \begin{split} \label{eq:thm_1_square Ainv 12}
	& A^{-1}_{12} - c_1 c_2 = \tau_1 - 2 \tau_1 c_2 - c_1,\\
	& A^{-1}_{12} - c_1 c_2 = \tau_2 - 2 \tau_2 c_1 - c_2,
\end{split} \end{align}
and therefore
\begin{equation} \label{eq:thm_1_square lin eq in c}
	\tau_1 - 2 \tau_1 c_2 - c_1 = \tau_2 - 2 \tau_2 c_1 - c_2.
\end{equation}

Look how wonderful! We have obtained a linear equation in $c$. By this point, we are basically done. Note that $(c - \mu) \times (\nu - \mu)$ is a multiple of $(\tau_1 - 2 \tau_1 c_2 - c_1) - (\tau_2 - 2 \tau_2 c_1 - c_2)$. That is, one can verify using the definitions of $\mu$, $\nu$, $\tau$ that for any $x \in \R^2$,
\begin{equation} \label{eq:thm_1_square L eqn in terms of other lin eq in c}
	(x - \mu) \times (\nu - \mu) = -\frac{1}{2} \frac{(p_1 - 1)(p_2 - 1)}{p_1 + p_2 - 1} \big((\tau_1 - 2 \tau_1 x_2 - x_1) - (\tau_2 - 2 \tau_2 x_1 - x_2)\big).
\end{equation}

(Here, $p_1 + p_2 - 1 \ne 0$ because $p$ does not lie on the line connecting $(0, 1)$ and $(1, 0$).) From equations \ref{eq:thm_1_square lin eq in c} and \ref{eq:thm_1_square L eqn in terms of other lin eq in c} it follows that $(c - \mu) \times (\nu - \mu) = 0$, and therefore $c$ lies on $L$.

Note that $c$ cannot be equal to $\mu$. Indeed, if $c = \mu$, equations \ref{eq:thm_1_square Ainv 11, 22} and \ref{eq:thm_1_square Ainv 12} dictate that $A^{-1} = \begin{pmatrix} 1/4 & -1/4\\ -1/4 & 1/4 \end{pmatrix}$. But then $A^{-1}$ fails to be invertible, a contradiction because $A$ is the inverse of $A^{-1}$. A similar argument shows that $c$ cannot be equal to $\nu$ or $\tau$.

Suppose $d$ is a point on $L$ that is not one of $\mu$, $\nu$, and $\tau$. Since $L$ is the line passing through $\mu$ and $\nu$ and $d$ is a point on $L$, we can write $d$ as
\begin{equation} \label{eq:thm_1_square d in terms of t}
	d = \mu + (\nu - \mu) t
\end{equation}
for some real number $t$. Let
\begin{equation} \label{eq:thm_1_square B, k defn}
	B \coloneqq \begin{pmatrix}
		d_1^2 & k\\
		k & d_2^2
	\end{pmatrix}, \;\;\; \text{where} \; k \coloneqq d_1 d_2 + \tau_1 - d_1 - 2 \tau_1 d_2.
\end{equation}

(The definition of $B$ was inspired by equations \ref{eq:thm_1_square Ainv 11, 22} and \ref{eq:thm_1_square Ainv 12}.) If $B$ happens to be invertible, we make the following definition:
\begin{equation} \label{eq:thm_1_square K defn}
	K \coloneqq \{x \in \R^2: (x - d)^T B^{-1} (x - d) = 1\}.
\end{equation}
One can verify using Definition \ref{defn:center} and Lemma \ref{lemma:conic line tangency condition} that $K$ is an ellipse or hyperbola centered at $d$ that is tangent to the four extended sides of $Q$.

Whether $B$ is invertible or not is determined by $d$, which is in turn described by $t$. We therefore wish to express $\det{B} = d_1^2 d_2^2 - k^2$ in terms of $t$. First, consider $k$. Using equations \ref{eq:thm_1_square d in terms of t} and \ref{eq:thm_1_square B, k defn} and expressing $\mu$, $\nu$, $\tau$ in terms of $p$, we arrive at the following surprisingly simple formula:
$$k = d_1 d_2 + \frac{1}{2}(t - 1).$$
With some further algebraic manipulations, one can verify that $\det B$ factors as
$$\det B = -\frac{1}{4} t (t - 1) (t \cdot (p_1 - 1)(p_2 - 1) + (p_1 + p_2 - 1)).$$
The determinant of $B$ is zero if and only if $t = 0$, $t = 1$, or $t = -\frac{p_1 + p_2 - 1}{(p_1 - 1)(p_2 - 1)}$. (Here, $p_1, p_2 \ne 1$ because $Q$ is not a trapezoid.) By equation \ref{eq:thm_1_square d in terms of t}, the values of $d$ corresponding to these three cases are $\mu$, $\nu$, and $\tau$. Since $d$ is not equal to either of $\mu$, $\nu$, and $\tau$, the determinant of $B$ is nonzero, and the conic $K$ as defined in equation \ref{eq:thm_1_square K defn} is an ellipse or hyperbola centered at $d$ and tangent to the four extended sides of $Q$.
\end{proof}

Why is it that $\mu$, $\nu$, and $\tau$ are the only three points of $L$ where no ellipse or hyperbola tangent to the four extended sides of $Q$ can be centered? A somewhat informal explanation that CodeParade alluded to in their YouTube video \cite{codeparade} is that these three points correspond to centers of \dq{infinitely thin ellipses} - that is, ellipses that have \dq{infinitely small minor axis.} These \dq{ellipses} are not formally considered conic sections and we avoid them in our proof, though one might imagine an alternative definition of conics where \dq{infinitely thin ellipses} are considered degenerate conics. We leave this topic and proceed to prove Theorem \ref{thm:theorem_1_trapezoid_but_not_parallelogram}, which can be thought of as expanding Theorem \ref{thm:theorem_1_square} to the case when $Q$ is a trapezoid and $\tau$ is a \dq{point at infinity.}

\begin{theorem} \label{thm:theorem_1_trapezoid_but_not_parallelogram}
Suppose $Q$ is a quadrilateral (not necessarily simple) whose vertices, listed in order, are the points $(0, 0)$, $(1, 0)$, $(1, s)$, $(0, 1)$, where $s \ne 0$ is a real number. Suppose also that $s \ne 1$, so that $Q$ is not a parallelogram. Let $L \coloneqq \{(1/2, t): t \in \R\}$ be the line passing through the midpoints $(1/2, 1/2)$ and $(1/2, s/2)$ of the two diagonals of $Q$.
\begin{enumerate}
	\item[1.] If $c$ is the center of some ellipse or hyperbola tangent to the four extended sides of $Q$, then $c$ lies on $L$.
	\item[2.] Every point of $L$, except $(1/2, 1/2)$ and $(1/2, s/2)$, is the center of some ellipse or hyperbola tangent to the four extended sides of $Q$.
\end{enumerate}
\end{theorem}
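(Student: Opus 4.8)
The plan is to run the same tangency-equation computation as in Theorem \ref{thm:theorem_1_square}, but with the four explicit sides of the trapezoid, exploiting that two of those sides are parallel. Writing a candidate central conic as $K = \{x \in \R^2 : (x - c)^T A (x - c) = 1\}$ for a nonsingular symmetric $A$, I would first record the four extended sides in the form $n^T x = b$: the bottom side $(0,1)^T x = 0$, the left side $(1,0)^T x = 0$, the right side $(1,0)^T x = 1$, and the slanted top side through $(1,s)$ and $(0,1)$, namely $(1-s, 1)^T x = 1$. Lemma \ref{lemma:conic line tangency condition} then converts each tangency requirement into an equation in the entries of $A^{-1}$ and the coordinates of $c$.

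Part 1 is almost immediate. Tangency to the two parallel vertical sides forces $A^{-1}_{11} = c_1^2$ and $A^{-1}_{11} = (1 - c_1)^2$ at once, hence $c_1^2 = (1-c_1)^2$ and $c_1 = 1/2$; so $c$ lies on $L$. Geometrically this is just the statement that a central conic tangent to two parallel lines is centered on their midline.

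For part 2, I would fix a target center $d = (1/2, t)$ on $L$ with $t \neq 1/2$ and $t \neq s/2$, and try to build a conic centered there. With the first coordinate equal to $1/2$, the bottom and left/right tangency conditions pin down $A^{-1}_{11} = 1/4$ and $A^{-1}_{22} = t^2$, while the slanted-side condition is linear in $A^{-1}_{12}$ and, since $s \neq 1$, solvable uniquely, giving $A^{-1}_{12} = \frac{s - (1+s)t}{2(1-s)}$. These three numbers define a symmetric matrix $M$ (the intended $A^{-1}$). I would then compute its determinant and check that it factors as
\[
	\det M = \frac{-s\,(t - \tfrac12)(t - \tfrac{s}{2})}{(1-s)^2},
\]
so that, because $s \neq 0$ and $s \neq 1$, it vanishes exactly at the two excluded points and is nonzero everywhere else on $L$. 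For admissible $t$ the matrix $M$ is thus nonsingular, and $A := M^{-1}$ yields a genuine conic $K$ centered at $d$; since Lemma \ref{lemma:conic line tangency condition} says the equations I imposed are exactly the four tangency conditions, $K$ is tangent to all four extended sides.

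The main obstacle is the determinant factorization — verifying that its two roots are precisely the midpoints of the diagonals, $t = 1/2$ and $t = s/2$, is the computational heart of the argument, exactly mirroring the factorization of $\det B$ in Theorem \ref{thm:theorem_1_square}. A second, easily handled subtlety is ruling out the degenerate possibility that $K$ is empty: because the forced value $A^{-1}_{11} = 1/4$ is strictly positive, $M$ can never be negative definite, so it is either positive definite (and $K$ is an ellipse) or indefinite (and $K$ is a hyperbola), never the empty set cut out by $(x-c)^T A (x-c) = 1$ with $A$ negative definite.
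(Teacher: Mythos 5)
Your proposal is correct and follows essentially the same route as the paper's proof: deriving $c_1 = 1/2$ from the two parallel sides via Lemma \ref{lemma:conic line tangency condition}, then solving the tangency equations for $A^{-1}$ at a prescribed center $d=(1/2,t)$ and checking that the resulting determinant vanishes exactly at the two diagonal midpoints (your value of $A^{-1}_{12}$ and your determinant factorization agree with the paper's $m$ and $\det B$ under the change of parameter). Your explicit observation that $A^{-1}_{11}=1/4>0$ rules out the negative-definite (empty) case is a small but worthwhile addition that the paper leaves to the reader.
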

\begin{proof}
An equation of the form $n^T x = b$ gives the line $\{x \in \R^2: n^T x = b\}$. The four sides of $Q$ are given by
\begin{equation} \label{eq:thm_1_square_tbnp horizontal line}
\begin{aligned}[c]
	& \column{1}{0}^T x = 0,\\
	& \column{0}{1}^T x = 0,\\
\end{aligned}
\;\;\;\;\;
\begin{aligned}[c]
	& \column{1 - s}{1}^T x = 1,\\
	& \column{1}{0}^T x = 1.
\end{aligned}
\end{equation}

Suppose $K$ is an ellipse or hyperbola that is tangent to the four extended sides of $Q$. Write $K$ as
\begin{equation}
	K = \{x \in \R^2: (x - c)^T A (x - c) = 1\}
\end{equation}
for some nonsingular symmetric matrix $A$ and vector $c$. By Lemma \ref{lemma:conic line tangency condition},
\begin{align} \begin{split} \label{eq:thm_1_square_tbnp Ainv}
	& A^{-1}_{11} = c_1^2,\\
	& A^{-1}_{22} = c_2^2,\\
	& A^{-1}_{22} + 2 (1 - s) A^{-1}_{12} + (1 - s)^2 A^{-1}_{11} = (1 - (1-s) c_1 - c_2)^2,\\
	& A^{-1}_{11} = (1 - c_1)^2.
\end{split} \end{align}

From the first and fourth equations we have $c_1^2 = A^{-1}_{11} = (1 - c_1)^2$, and therefore $c_1 = 1/2$. This shows that $c$ lies on $L$.

If $c$ were equal to $(1/2, 1/2)$, equation \ref{eq:thm_1_square_tbnp Ainv} would dictate that $A^{-1} = \begin{pmatrix} 1/4 & -1/4\\ -1/4 & 1/4\end{pmatrix}$, which is impossible because $A^{-1}$ has to be invertible. A similar argument shows that $c$ cannot be equal to $(1/2, s/2)$.

Let $d$ be a point on $L$ different from $(1/2, 1/2)$ and $(1/2, s/2)$; write $d = (\frac{1}{2}, \frac{1}{2} + \frac{s-1}{2}t)$ for some real number $t$ different from $0$ and $1$. Let
\begin{equation}
	B \coloneqq \begin{pmatrix}
		d_1^2 & m\\
		m & d_2^2
	\end{pmatrix}, \;\;\; \text{where} \; m \coloneqq d_1 d_2 + \frac{1}{2(1-s)} - d_1 - \frac{1}{1-s} d_2.
\end{equation}

The determinant of $B$ factors as
$$\det{B} = -\frac{s}{4} t (t-1).$$

Since $t \ne 0$ and $t \ne 1$, $\det{B} \ne 0$. One can verify using Definition \ref{defn:center} and Lemma \ref{lemma:conic line tangency condition} that
\begin{equation}
	K \coloneqq \{x \in \R^2: (x - d)^T B^{-1} (x - d) = 1\}
\end{equation}
is an ellipse or hyperbola that is centered at $d$ and tangent to the four extended sides of $Q$.
\end{proof}

\begin{figure}[h] \label{fig:theorem 2 square illustration}
\caption{Illustration of Theorem \ref{thm:theorem 2 square}}
\centering
\includegraphics[width=0.9\textwidth]{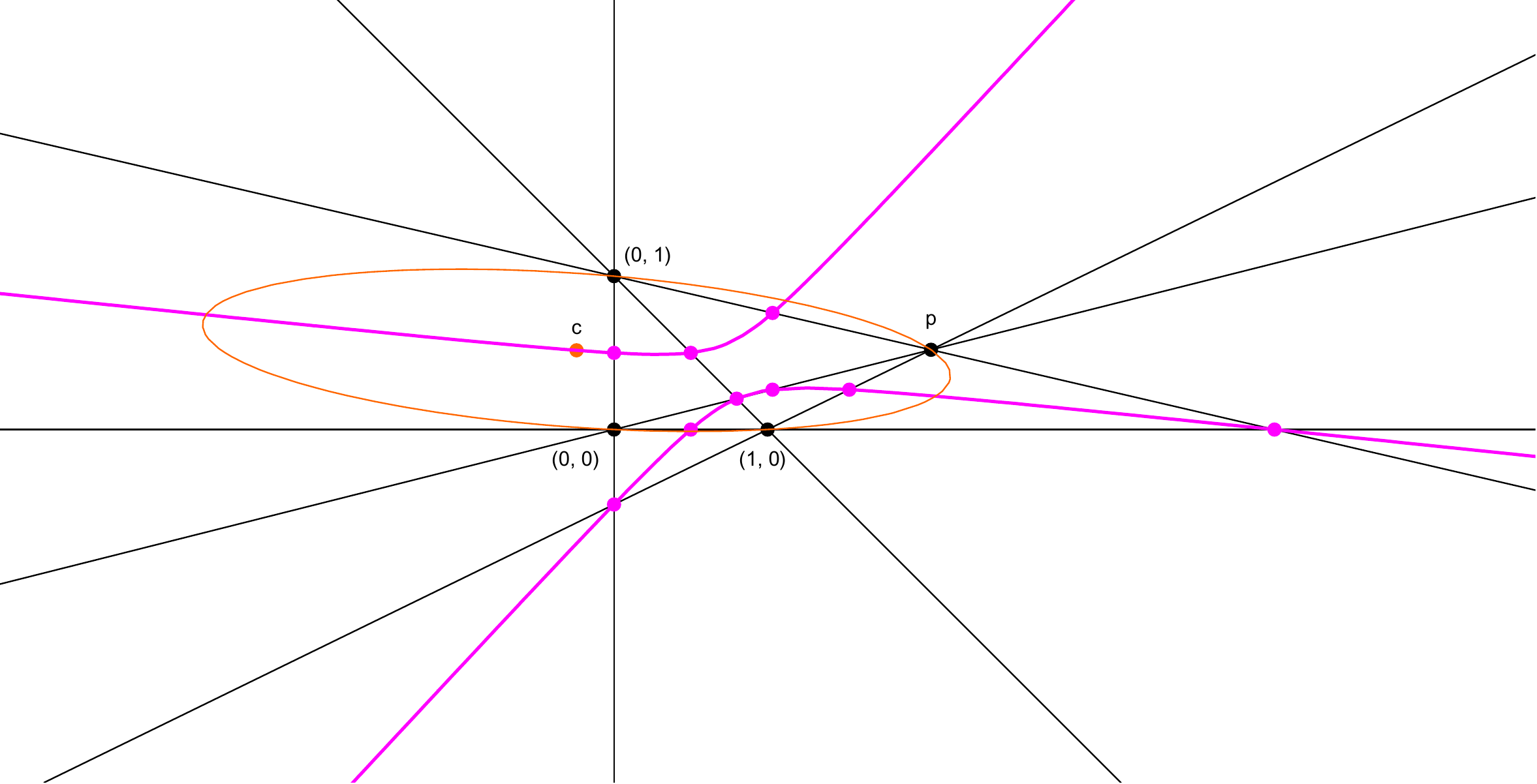}
\end{figure}

We will now shift discussion from conics that are tangent to the four extended sides of a quadrilateral to conics that pass through the four vertices of a quadrilateral. Theorem \ref{thm:theorem 2 square} has been proven by Minthorn \cite{minthorn}, but we give a proof that is shorter and does not employ use the advanced machinery of projective geometry.

\begin{theorem} \label{thm:theorem 2 square}
Suppose $Q$ is a simple quadrilateral whose vertices, listed in order, are the points $(0, 0)$, $(1, 0)$, $p$, $(0, 1)$, where $p \in \R^2$. Suppose also that $Q$ is not a trapezoid, that the diagonals of $Q$ are not parallel, and that no three vertices of $Q$ lie on a line. There exists an ellipse or hyperbola $H$ that satisfies the following properties:
\begin{enumerate}
	\item[1.] Every point of $H$ is the unique center of some conic that passes through the four vertices of $Q$.
	\item[2.] If $c$ is a center of some conic that passes through the four vertices of $Q$, then $c$ lies on $H$.
	\item[3.] $H$ contains the nine points $(\frac{1}{2}, 0)$, $(\frac{1 + p_1}{2}, \frac{p_2}{2})$, $(\frac{p_1}{2}, \frac{1 + p_2}{2})$, $(0, \frac{1}{2})$, $(\frac{1}{2}, \frac{1}{2})$, $(\frac{p_1}{2}, \frac{p_2}{2})$, $(\frac{p_1}{1 - p_2}, 0)$, $(0, \frac{p_2}{1 - p_1})$, $(\frac{p_1}{p_1 + p_2}, \frac{p_2}{p_1 + p_2})$, which are the midpoints of the four sides of $Q$, the midpoints of the two diagonals of $Q$, the two intersections of the (extended) opposite sides of $Q$, and the intersection of the diagonals of $Q$.
	\item[4.] The center of $H$ is $(\frac{1 + p_1}{4}, \frac{1 + p_2}{4})$, which is the arithmetic mean $\frac{1}{4}\lr{\column{0}{0} + \column{1}{0} + \column{0}{1} + p}$ of the four vertices of $Q$.
	\item[5.] $H$ is a hyperbola if and only if $Q$ is strictly convex.
\end{enumerate}
\end{theorem}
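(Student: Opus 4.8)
The plan is to coordinatize the pencil of conics through the four vertices, extract each conic's center with Lemma~\ref{lemma:conic center}, and eliminate the coefficients to leave one quadratic equation in the center; that equation will \emph{be} $H$. Writing a general quadratic $g(x) = \alpha x_1^2 + \beta x_1 x_2 + \gamma x_2^2 + \delta x_1 + \eps x_2 + \zeta$ and imposing $g(0,0)=g(1,0)=g(0,1)=0$ forces $\zeta = 0$, $\delta = -\alpha$, $\eps = -\gamma$, so every conic through the first three vertices is $g(x) = \alpha x_1^2 + \beta x_1 x_2 + \gamma x_2^2 - \alpha x_1 - \gamma x_2$, and $g(p)=0$ becomes the single linear constraint
\[
	p_1(p_1-1)\,\alpha + p_2(p_2-1)\,\gamma + p_1 p_2\,\beta = 0 .
\]
Thus the conics through all four vertices form a two-parameter family.

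Next I would locate the center. By Lemma~\ref{lemma:conic center} the center $c$ is where $\nabla g$ vanishes, which reads
\[
	\alpha(2c_1 - 1) + \beta c_2 = 0, \qquad \beta c_1 + \gamma(2c_2 - 1) = 0 .
\]
Eliminating $\alpha, \beta, \gamma$ between these two equations and the constraint gives a single quadratic in $c$,
\[
	-p_1(p_1-1)\,c_2(2c_2-1) - p_2(p_2-1)\,c_1(2c_1-1) + p_1 p_2 (2c_1-1)(2c_2-1) = 0,
\]
and I define $H$ to be the conic it cuts out; this already yields item~2. Its quadratic-form matrix is
\[
	M = \begin{pmatrix} -2p_2(p_2-1) & 2p_1p_2 \\ 2p_1p_2 & -2p_1(p_1-1) \end{pmatrix}, \qquad \det M = 4 p_1 p_2 (1 - p_1 - p_2),
\]
which is nonzero because the hypotheses give $p_1 \neq 0$, $p_2 \neq 0$, $p_1 + p_2 \neq 1$; hence $H$ is nondegenerate, and since item~3 will exhibit real points on it, $H$ is a genuine ellipse or hyperbola.

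For item~1 I would run the construction backwards. Given $c \in H$, the choice $\alpha = -c_2(2c_2-1)$, $\gamma = -c_1(2c_1-1)$, $\beta = (2c_1-1)(2c_2-1)$ solves the center equations identically and, because $c$ lies on $H$, satisfies the constraint; the resulting conic passes through the four vertices and has $c$ as a center, and Corollary~\ref{corr:unique center} upgrades this to a \emph{unique} center once one checks $\det A \neq 0$ by factoring. The delicate point --- and I expect this to be one of the two main obstacles --- is that this explicit representative collapses to the zero polynomial at the three midpoints $(\tfrac12,0)$, $(0,\tfrac12)$, $(\tfrac12,\tfrac12)$, so there a separate scaling must be chosen and nondegeneracy re-verified by hand, the computation turning on $p_1 + p_2 \neq 0$ and $p_1 + p_2 \neq 1$. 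Items~3 and~4 are then direct: each of the nine points is substituted into the equation of $H$ and seen to vanish (the three ``intersection'' points are fastest to see as the centers of the three degenerate line-pair members of the pencil, which are nonsingular precisely because $Q$ is not a trapezoid and its diagonals are not parallel), and the center of $H$, found by setting $\nabla = 0$, works out to $\tfrac14$ of the vertex sum.

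The remaining obstacle is item~5, converting the algebraic sign of $\det M$ into the geometric notion of strict convexity. Here $H$ is a hyperbola iff $M$ is indefinite iff $\det M = 4p_1p_2(1-p_1-p_2) < 0$. To match this to convexity I would compute the four consecutive ``turning'' cross products of the edge vectors of $Q$, which come out to $p_2$, $p_1 + p_2 - 1$, $p_1$, and $1$; since these must share a sign for convexity and the last is $+1$, $Q$ is strictly convex exactly when $p_1, p_2, p_1+p_2-1 > 0$, which is exactly $\det M < 0$. The one subtlety is that $\det M < 0$ also holds for the sign pattern with a single negative coordinate among $p_1, p_2$; I would dispose of this by showing that such a $p$ forces a side of $Q$ to cross the opposite side, contradicting the standing hypothesis that $Q$ is simple, so that among simple quadrilaterals the equivalence is exact.
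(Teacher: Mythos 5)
Your construction is, up to an overall factor of $2$, identical to the paper's: your quadratic in $c$ is exactly twice the paper's nine-point polynomial $\Gamma$, your explicit representative $\alpha = -c_2(2c_2-1)$, $\beta = (2c_1-1)(2c_2-1)$, $\gamma = -c_1(2c_1-1)$ is the paper's matrix $B$, and your three collapse points are where the paper's construction also breaks down. Two of your choices are actually cleaner than the paper's: eliminating $(\alpha,\beta,\gamma)$ as a homogeneous linear system avoids the paper's case split on $A_{12} \ne 0$ versus $A_{12} = 0$, and your leftover set for item~1 is genuinely only $\{(\tfrac12,0),(0,\tfrac12),(\tfrac12,\tfrac12)\}$ (the two hidden vertices, which the paper also treats separately via $g_i$, are in fact handled by the generic representative, which there degenerates to a line pair with nonsingular quadratic part and hence still has a unique center). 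Your item~5 argument via edge cross products and simplicity is different from the paper's four-triangle case analysis but equally valid, provided you do carry out the promised verification that a sign pattern with exactly one of $p_1, p_2$ negative forces two opposite sides to cross.

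There is one genuine gap: your justification that $H$ is an ellipse or hyperbola. Nonsingularity of the quadratic-form matrix $M$ together with the existence of real points does \emph{not} exclude the possibility that $H$ is a pair of intersecting lines --- the conic $x_1 x_2 = 0$ has nonsingular quadratic part and plenty of real points. Since the theorem explicitly asserts that $H$ is an ellipse or hyperbola (and item~1 quietly relies on $H$ being a curve, not two lines, only in spirit), you must rule out the crossed-lines case. The paper does this by checking that $H$ does not contain its own center: one computes that the defining polynomial evaluated at $\bigl(\frac{1+p_1}{4},\frac{1+p_2}{4}\bigr)$ equals a nonzero multiple of $(p_1-1)(p_2-1)(p_1+p_2)$, which is nonzero precisely because $Q$ is not a trapezoid and its diagonals are not parallel. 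Note that these two hypotheses appear nowhere in your nondegeneracy argument (you only invoke $p_1, p_2 \ne 0$ and $p_1+p_2 \ne 1$), which is a sign that something is missing: if, say, $p_1 = 1$, the nine-point locus really is a crossed line pair, so the missing check is not a formality. Adding the one-line computation of the value of your quadratic at the center closes the gap.
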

\begin{proof}
For two distinct points $x, y$ in the plane, denote by $x\#y$ the line that passes through $x$ and $y$. By assumption, $p$ lies on neither of $(0, 0)\#(0, 1)$, $(0, 0)\#(1, 0)$, $(0, 1)\#(1, 0)$. This yields the following three inequalities:
\begin{equation} \label{eq:thm_2_square p does not lie on lines}
	p_1 \ne 0, \;\;\; p_2 \ne 0, \;\;\; p_1 + p_2 \ne 1.
\end{equation}

Since $Q$ is not a trapezoid, $(0, 0)\#(0, 1)$ is not parallel to $(1, 0)\#p$, and $(0, 0)\#(1, 0)$ is not parallel to $(0, 1)\#p$. Also, the diagonals $(0, 0)\#p$ and $(0, 1)\#(1, 0)$ of $Q$ are not parallel. Therefore,
\begin{equation} \label{eq:thm_2_square no three lines of Q parallel consequences}
	p_1 \ne 1, \;\;\; p_2 \ne 1, \;\;\; p_1 + p_2 \ne 0.
\end{equation}

Define the function $\Gamma: \R^2 \to \R$ as
\begin{multline} \label{eq:thm_2_square Gamma}
	\Gamma(x) \coloneqq p_2(1 - p_2) x_1(x_1 - \frac{1}{2}) + p_1(1 - p_1) x_2 (x_2 - \frac{1}{2}) +
	\\ + 2 p_1 p_2 (x_1 - \frac{1}{2})(x_2 - \frac{1}{2}).
\end{multline}

Though it may look complicated, $\Gamma(x)$ is actually just a quadratic polynomial in $x_1$ and $x_2$. One can check through trivial (albeit laborious) algebra that $\Gamma$ evaluates to zero at the nine points $(0, \frac{1}{2})$, $(\frac{1}{2}, 0)$, $(\frac{1 + p_1}{2}, \frac{p_2}{2})$, $(\frac{p_1}{2}, \frac{1 + p_2}{2})$, $(\frac{1}{2}, \frac{1}{2})$, $(\frac{p_1}{2}, \frac{p_2}{2})$, $(\frac{p_1}{1 - p_2}, 0)$, $(0, \frac{p_2}{1 - p_1})$, $(\frac{p_1}{p_1 + p_2}, \frac{p_2}{p_1 + p_2})$. Hence, we shall call the curve $\{x \in \R^2: \Gamma(x) = 0\}$ the \textit{nine-point conic}.

To determine what kind of shape the nine-point conic is, let us compute the determinant of the Hessian matrix of the function $\Gamma$:
\begin{equation} \label{eq:thm_2_square det H_g}
    \det \text{H}_\Gamma = \det \begin{pmatrix}
        2 p_2(1 - p_2) & 2 p_1 p_2\\
        2 p_1 p_2 & 2 p_1(1 - p_1)
    \end{pmatrix} = -4 p_1 p_2 (p_1 + p_2 - 1).
\end{equation}

Since $p_1, p_2 \ne 0$ and $p_1 + p_2 \ne 1$, $\det \text{H}_\Gamma$ is guaranteed to be nonzero, and by Corollary \ref{corr:unique center} of Lemma \ref{lemma:conic center}, the nine-point conic has a unique center. A simple calculation shows that the gradient of $\Gamma$ evaluates to zero at $(\frac{1 + p_1}{4}, \frac{1 + p_2}{4})$:
$$\nabla \Gamma(\frac{1 + p_1}{4}, \frac{1 + p_2}{4}) = 0.$$

It follows that $(\frac{1 + p_1}{4}, \frac{1 + p_2}{4})$, which is the arithmetic mean of the four vertices of $Q$, is the unique center of $\Gamma$. One can verify that
$$\Gamma(\frac{1 + p_1}{4}, \frac{1 + p_2}{4}) = -\frac{1}{16} (p_1 - 1)(p_2 - 1) (p_1 + p_2).$$

By equation \ref{eq:thm_2_square no three lines of Q parallel consequences}, this is nonzero. Since the nine-point conic does not contain its unique center, it is an ellipse or hyperbola.

Suppose $Q$ is strictly convex. Since $p$ lies to the same side of $(0, 0)\#(0, 1)$ as $(1, 0)$, $p_1 > 0$. Since $p$ lies to the same side of $(0, 0)\#(1, 0)$ as $(0, 1)$, $p_2 > 0$. Since $p$ lies above $(0, 1)\#(1, 0)$, $p_1 + p_2 > 1$. Then $\det \text{H}_\Gamma < 0$ (see equation \ref{eq:thm_2_square det H_g}), and therefore the nine-point conic is a hyperbola.

Suppose $Q$ is not strictly convex, so that one of the four vertices of $Q$ lies inside the triangle formed by the other three vertices (recall that by assumption, no three points of $Q$ lie on a line). First suppose $p$ lies in the triangle formed by $(0, 0)$, $(1, 0)$, $(1, 1)$. Then $p_1 > 0$, $p_2 > 0$, $p_1 + p_2 < 1$, so $\det \text{H}_\Gamma > 0$. Second, suppose $(1, 0)$ lies in the triangle formed by $(0, 0)$, $p$, $(0, 1)$. Then $p_1 > 1$, $p_2 < 0$, $p_1 + p_2 > 1$, so $\det \text{H}_\Gamma > 0$. Third, suppose $(0, 1)$ lies in the triangle formed by $(0, 0)$, $p$, $(1, 0)$. Then $p_1 < 0$, $p_2 > 1$, $p_1 + p_2 > 1$, so $\det \text{H}_\Gamma > 0$. Fourth, suppose $(0, 0)$ lies in the triangle formed by $(1, 0)$, $p$, $(0, 1)$. Then $p_1 < 0$, $p_2 < 0$, $p_1 + p_2 < 1$, so $\det \text{H}_\Gamma > 0$. In either case, $\det \text{H}_\Gamma > 0$, so the nine-point conic is an ellipse.

Having sufficiently investigated the shape of the nine-point conic, we will now proceed to prove how it relates to centers of conics passing through the four vertices of $Q$. We prove part 2 of the current theorem. Suppose $c$ is a center of some conic $K$ that passes through the four vertices of $Q$. Write $K$ as
\begin{equation} \label{eq:thm_2_square 1}
	K = \{x \in \R^2: x^T A x + v^T x + s = 0\},
\end{equation}
where $A$ is a nonsingular symmetric matrix, $v$ is a vector, and $s$ is a real number. We wish to show that $c$ lies on the nine-point conic. That is,
$$
	\Gamma(c) \stackrel{?}{=} 0.
$$

We don't know if $c$ is the unique center of $K$ or if $K$ has many centers; the algebra works out to eventually yield $\Gamma(c) = 0$, so presumably the restrictions we have imposed on $Q$ stipulate that every conic passing through its four vertices either has no center (e.g., is a parabola) or has a unique center. Let
\begin{equation} \label{eq:thm_2_square f defn}
	f(x) \coloneqq x^T A x + v^T x + s.
\end{equation}

The condition that $K$ passes through the four vertices of $Q$ then becomes $f((0, 0)) = 0$, $f((1, 0)) = 0$, $f(p) = 0$, $f((0, 1)) = 0$. From the three equations $f((0, 0)) = 0$, $f((1, 0)) = 0$, $f((0, 1)) = 0$ we obtain the following relations:
\begin{equation} \label{eq:thm_2_square s, A11, A22 formulas}
	s = 0, \;\;\; A_{11} = -v_1, \;\;\; A_{22} = -v_2.
\end{equation}

This allows us to write $f(x)$ as
\begin{align} \begin{split} \label{eq:thm_2_square f formula}
	f(x) =& v^T x + x^T \begin{pmatrix}
-v_1 & \alpha\\
\alpha & -v_2
\end{pmatrix} x\\
	=& v_1 x_1(1 - x_1) + v_2 x_2 (1 - x_2) + 2 \alpha x_1 x_2.
\end{split} \end{align}

Here, $\alpha$ denotes $A_{12}$ (which is also equal to $A_{21}$ because $A$ is symmetric). An additional constraint on $f$ comes from the fact that $K$ must be centered at $c$. The equation produced by Lemma \ref{lemma:conic center} is
\begin{equation} \label{eq:thm_2_square grad f at c}
	\nabla f(c) = 2 A c + v = 0.
\end{equation}

There is one constraint on $f$ that we haven't used yet, namely that $f(p) = 0$. To prove that $\Gamma(c) = 0$, we proceed in cases. First, suppose $\alpha$ is nonzero. The condition $f(p) = 0$ of course implies that $\frac{(c_1 - 1/2)(c_2 - 1/2)}{\alpha} f(p) = 0$. Substituting equation \ref{eq:thm_2_square f formula}, we get
\begin{multline*}
	p_1(1 - p_1) \frac{v_1(c_1 - 1/2)}{\alpha} (c_2 - 1/2) + p_2(1 - p_2) \frac{v_2(c_2 - 1/2)}{\alpha} (c_1 - 1/2) + \\
		+ 2 p_1 p_2 (c_1 - 1/2) (c_2 - 1/2) = 0.
\end{multline*}

From equations \ref{eq:thm_2_square grad f at c} and \ref{eq:thm_2_square s, A11, A22 formulas} it follows that
$$\frac{v_1(c_1 - 1/2)}{\alpha} = c_2, \;\;\; \frac{v_2(c_2 - 1/2)}{\alpha} = c_1.$$

Therefore,
$$
	p_1(1 - p_1) c_2(c_2 - \frac{1}{2}) + p_2(1 - p_2) c_1(c_1 - \frac{1}{2}) + 2 p_1 p_2 (c_1 - \frac{1}{2})(c_2 - \frac{1}{2}) = 0.
$$

We have obtained the desired equality $\Gamma(c) = 0$. (The definition of $\Gamma$ was originally inspired by the above equation.)

Now suppose $\alpha = 0$, so that $A$ is a diagonal matrix. Since $A$ is nonsingular, $A_{11} = -v_1 \ne 0$ and $A_{22} = -v_2 \ne 0$. Equation \ref{eq:thm_2_square grad f at c} simplifies to
$$
	\column{v_1}{v_2} + 2 \begin{pmatrix}
-v_1 & 0\\
0 & -v_2
\end{pmatrix} \column{c_1}{c_2} = \column{0}{0},
$$
and it follows that $c_1 = c_2 = 1/2$, so $c = (1/2, 1/2)$. Direct calculation shows that $\Gamma((1/2, 1/2)) = 0$. This completes the proof of part 2.

To prove part 1 of this theorem, which states that every point on the nine-point conic is the unique center of some conic that passes through the four vertices of $Q$, let $d$ be a point with
$$\Gamma(d) = 0.$$

Let us first work in the assumption that neither of $d_1, d_2$ is equal to $0$ or $1/2$ and that $d_1 + d_2 \ne 1/2$. Define the matrix $B$ as
$$B = \begin{pmatrix}
-d_2 (d_2 - 1/2) & (d_1 - 1/2)(d_2 - 1/2)\\
(d_1 - 1/2)(d_2 - 1/2) & -d_1 (d_1 - 1/2)
\end{pmatrix},$$
and let
$$g(x) \coloneqq (x - d)^T B (x - d) - d^T B d.$$
Let $J$ be the conic $g(x) = 0$. Clearly, $d$ is a center of $J$. To see why $d$ is the unique center of $J$, consider the determinant of $B$:
$$\det B = (d_1 - 1/2)(d_2 - 1/2) \frac{d_1 + d_2 - 1/2}{2}.$$
By assumption, $d_1, d_2 \ne 1/2$ and $d_1 + d_2 \ne 1/2$, so $\det B \ne 0$. By Corollary \ref{corr:unique center}, $J$ has exactly one center. Some simple (though laborious) algebra shows that $g((0, 0)) = 0$, $g((1, 0)) = 0$, $g(p) = 0$, $g((0, 1)) = 0$, which means that $J$ passes through the four vertices of $Q$.

We now proceed to list the \dq{leftover cases,} e.g., when $d$ does not satisfy the assumptions $d_1, d_2 \notin \{0, 1/2\}$ and $d_1 + d_2 \ne 1/2$. Suppose $d_1 = 0$, so that $d = (0, s)$ for some $s \in \R$. The condition $\Gamma(d) = 0$ simplifies to $p_1 ((1 - p_1)s - p_2)(s - 1/2) = 0$, so $s = \frac{p_2}{1 - p_1}$ or $s = 1/2$, so $d = (0, \frac{p_2}{1 - p_1})$ or $d = (0, 1/2)$. Similarly, if $d_2 = 0$, then $d = (\frac{p_1}{1 - p_2}, 0)$ or $d = (1/2, 0)$. Suppose $d_1 = 1/2$, so that $d = (1/2, s)$ for some $s \in \R$. The condition $\Gamma(d) = 0$ simplifies to $p_1(1 - p_1) s (s - 1/2) = 0$, so $d = (1/2, 0)$ or $d = (1/2, 1/2)$. Similarly, if $d_2 = 1/2$, then $d = (0, 1/2)$ or $d = (1/2, 1/2)$. Suppose $d_1 + d_2 = 1/2$, so that $d = (\frac{1 + s}{4}, \frac{1 - s}{4})$ for some $s \in \R$. The condition $\Gamma(d) = 0$ simplifies to $(s^2 - 1)(p_1 + p_2)(1 - (p_1 + p_2)) = 0$. By equations \ref{eq:thm_2_square p does not lie on lines} and \ref{eq:thm_2_square no three lines of Q parallel consequences}, this implies that $s^2 - 1 = 0$, so $s \in \{1, -1\}$, and $d = (1/2, 0)$ or $d = (0, 1/2)$. The \dq{leftover cases} are $d = (0, 1/2)$, $d = (1/2, 0)$, $d = (1/2, 1/2)$, $d = (\frac{p_1}{1 - p_2}, 0)$, $d = (0, \frac{p_2}{1 - p_1})$. Let us handle them.

Define the quadratic functions $g_c$, $g_m$, $g_i$ as
\begin{align*} \begin{split}
	& g_c(x) \coloneqq \frac{\mu + \nu}{2}(x_1 - 1/2)^2 + \frac{\mu - \nu}{2}(x_2 - 1/2)^2 - \frac{\mu}{4},\\
	& g_m(x) \coloneqq p_2(1 - p_2) x_1(x_1 + 2 x_2 - 1) - p_1(p_1 + 2p_2 - 1) x_2(1 - x_2),\\
	& g_i(x) \coloneqq x_2 (p_1 - p_1 x_2 - (1 - p_2) x_1),
\end{split} \end{align*}
where in the definition of $g_c$,
$$\mu \coloneqq (p_2 - 1/2)^2 - (p_1 - 1/2)^2, \;\;\; \nu \coloneqq (p_1 - 1/2)^2 + (p_2 - 1/2)^2 - 1/2.$$

Let $J_c$ be the set of points $x$ where $g_c(x) = 0$; define $J_m$ and $J_i$ in terms of $g_m$ and $g_i$ similarly. It can be checked through trivial (though laborious) algebra that $g_c$, $g_m$, and $g_i$ evaluate to zero at the four vertices of $Q$; so $J_c$, $J_m$, and $J_i$ each pass through the four vertices of $Q$. Similarly one can verify that the gradients of $g_c$, $g_m$, and $g_i$ evaluate to zero at $(1/2, 1/2)$, $(0, 1/2)$, and $(\frac{p_1}{1 - p_2}, 0)$, respectively, so by Lemma \ref{lemma:conic center}, $(1/2, 1/2)$ is a center of $J_c$, $(0, 1/2)$ is a center of $J_m$, and $(\frac{p_1}{1 - p_2}, 0)$ is a center of $J_i$. We proceed to use Corollary \ref{corr:unique center} to show that each of $J_c$, $J_m$, and $J_i$ has exactly one center.

The determinants of the Hessians of $g_c$, $g_m$, and $g_i$ can be factored as
\begin{align*} \begin{split}
    & \det{\text{H}_{g_c}} = -4 p_1 p_2 (p_1 - 1) (p_2 - 1),\\
	& \det{\text{H}_{g_m}} = 4 p_2 (1 - p_2) (p_1 + p_2) (p_1 + p_2 - 1),\\
	& \det{\text{H}_{g_i}} = -(1 - p_2)^2.
\end{split} \end{align*}

These are nonzero by equations \ref{eq:thm_2_square p does not lie on lines} and \ref{eq:thm_2_square no three lines of Q parallel consequences}. The conic $J_i$ is degenerate, but it nevertheless has a unique center.

For each of the points $(1/2, 1/2)$, $(0, 1/2)$, and $(\frac{p_1}{1 - p_2}, 0)$, we have exhibited a conic that passes through the four vertices of $Q$ and has that point as the unique center. Similarly one can exhibit two conics such that one is uniquely centered at $(1/2, 0)$ and the other at $(0, \frac{p_2}{1 - p_1})$.
\end{proof}

\section{Stating the general theorems}
In this section we extend Theorems \ref{thm:theorem_1_square} and \ref{thm:theorem 2 square} to generic quadrilaterals. This is made possible by Lemma \ref{lemma:affine reduction to angle quadrilateral}, which is trivial but critically important.

\begin{lemma} \label{lemma:affine reduction to angle quadrilateral}
Let $Q$ be a quadrilateral such that no three points of $Q$ lie on a line. There exists an affine transformation $\phi$ such that $\phi(Q)$ is a quadrilateral three of whose vertices are $(0, 0)$, $(1, 0)$, $(0, 1)$.
\end{lemma}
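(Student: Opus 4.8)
The plan is to pick three of the four vertices of $Q$ and write down explicitly the unique affine map that carries them to $(0,0)$, $(1,0)$, $(0,1)$; the hypothesis that no three vertices lie on a line is exactly what guarantees this map is invertible, hence genuinely affine in the sense of Definition \ref{defn:affine trans}. First I would label the vertices of $Q$ in cyclic order as $q_1, q_2, q_3, q_4$ and single out $q_1, q_2, q_4$ to be sent to $(0,0)$, $(1,0)$, $(0,1)$ respectively. Choosing the first, second, and fourth vertices (rather than an arbitrary triple) means the image will have its vertices in the order $(0,0)$, $(1,0)$, $\phi(q_3)$, $(0,1)$, matching the hypotheses of Theorems \ref{thm:theorem_1_square} and \ref{thm:theorem 2 square} for which this lemma is the reduction step.

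Any affine map satisfying $\phi(q_1) = (0,0)$ has the form $\phi(x) = A(x - q_1)$, so it only remains to choose the invertible matrix $A$ so that $A(q_2 - q_1) = (1,0)$ and $A(q_4 - q_1) = (0,1)$. I would form the matrix $M$ whose two columns are $q_2 - q_1$ and $q_4 - q_1$; the two desired conditions say precisely that $A M = I$, i.e. $A = M^{-1}$. Because $q_1, q_2, q_4$ do not lie on a line, the vectors $q_2 - q_1$ and $q_4 - q_1$ are linearly independent, so $M$ is invertible and $A \coloneqq M^{-1}$ exists. Setting $b \coloneqq -A q_1$ then yields an affine transformation $\phi(x) = A x + b$, and by construction $\phi(q_1) = (0,0)$, $\phi(q_2) = (1,0)$, $\phi(q_4) = (0,1)$.

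Finally I would invoke Proposition \ref{prop:affine trans properties}: since $\phi$ is a bijection carrying lines to lines, it carries the four sides of $Q$ to four line segments that form the quadrilateral $\phi(Q)$, whose vertices are the images $\phi(q_1), \phi(q_2), \phi(q_3), \phi(q_4)$. Three of these are $(0,0)$, $(1,0)$, $(0,1)$, as required, and their cyclic order is preserved because $\phi$ preserves the incidence structure of the sides. The only substantive point to check is the invertibility of $M$, which is immediate from the non-collinearity hypothesis; accordingly there is essentially no obstacle here, consistent with the paper's remark that the lemma is trivial but critically important.
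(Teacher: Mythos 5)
Your proposal is correct and matches the paper's proof essentially verbatim: the paper also forms the matrix whose columns are $q^{(2)} - q^{(1)}$ and $q^{(4)} - q^{(1)}$, notes it is invertible because those three vertices do not lie on a line, and sets $\phi(x) = B^{-1}(x - q^{(1)})$. No gaps.
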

\begin{proof}
Label the vertices of $Q$ as $q^{(1)}$, $q^{(2)}$, $q^{(3)}$, $q^{(4)}$. Define the matrix $B$ as
$$B \coloneqq \begin{pmatrix}
q^{(2)}_1 - q^{(1)}_1 & q^{(4)}_1 - q^{(1)}_1\\
q^{(2)}_2 - q^{(1)}_2 & q^{(4)}_2 - q^{(1)}_2
\end{pmatrix}.$$

Qualitatively, $B$ is the matrix whose columns are $q^{(2)} - q^{(1)}$ and $q^{(4)} - q^{(1)}$. Since $q^{(1)}$, $q^{(2)}$, $q^{(4)}$ do not lie on a line, $q^{(2)} - q^{(1)}$ and $q^{(4)} - q^{(1)}$ are not multiples of each other, and therefore $B$ is invertible. Define the affine transformation $\phi$ as
$$\phi(x) \coloneqq B^{-1}(x - q^{(1)}).$$

Let $Q' = \phi(Q)$. One can verify that $\phi(q^{(1)}) = (0, 0)$, $\phi(q^{(2)}) = (1, 0)$, $\phi(q^{(4)}) = (0, 1)$; so three of the vertices of $Q'$ are $(0, 0)$, $(1, 0)$, and $(0, 1)$.
\end{proof}

A \textit{complete quadrilateral} is the figure determined by four lines, no three of which are concurrent, and their six points of intersection \cite{cqlwolfram} \cite[pp. 61--62]{johnson}. We do not use the notion of a complete quadrilateral directly, but it is related to Definition \ref{defn:two hidden vertices, third diag midpoint}.

\begin{definition} \label{defn:two hidden vertices, third diag midpoint}
Let $Q$ be a quadrilateral that is not a trapezoid. Let $L_1$, $L_2$, $L_3$, $L_4$ be the four extended sides of $Q$, in this particular order. If $L_1$ intersects $L_3$ at $p$, and if $L_2$ intersects $L_4$ at $q$, we call $p$ and $q$ the \textit{two hidden vertices} of $Q$ and $\frac{p + q}{2}$ the \textit{third diagonal midpoint} of $Q$.
\end{definition}

Theorem \ref{thm:theorem 1} is the main theorem of this paper. The proof is essentially an application of Lemma \ref{lemma:affine reduction to angle quadrilateral} and Proposition \ref{prop:affine trans properties} to Theorem \ref{thm:theorem_1_square}. Parts 1 and 2 of Theorem \ref{thm:theorem 1} can be proven by applying an affine transformation to Newton's quadrilateral theorem \cite[p.~117--118]{charmingproofs}, but part 3 is, to the best of our knowledge, new.

\begin{theorem}[The locus of the center of a tangent conic is the Newton line] \label{thm:theorem 1}
Suppose $Q$ is a quadrilateral that is not a parallelogram. Suppose also that no three points of $Q$ lie on a line. There exists a line $L$ with the following properties:
\begin{enumerate}
	\item[1.] The midpoints of the two diagonals of $Q$ lie on $L$. If $Q$ is not a trapezoid, the third diagonal midpoint of $Q$ lies on $L$.
	\item[2.] If $c$ is the center of some ellipse or hyperbola tangent to the four extended sides of $Q$, then $c$ lies on $L$.
	\item[3.] Every point of $L$, except the midpoints of the two diagonals of $Q$ and the third midpoint of $Q$, is the center of some ellipse or hyperbola tangent to the four extended sides of $Q$.
\end{enumerate}
\end{theorem}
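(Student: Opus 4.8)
The plan is to reduce the general statement to the two already-established special cases, Theorem~\ref{thm:theorem_1_square} and Theorem~\ref{thm:theorem_1_trapezoid_but_not_parallelogram}, by transporting everything through an affine transformation. First I would invoke Lemma~\ref{lemma:affine reduction to angle quadrilateral} to produce an affine map $\phi$ such that $Q' \coloneqq \phi(Q)$ has three of its vertices at $(0,0)$, $(1,0)$, $(0,1)$; labelling the vertices of $Q$ in order as $q^{(1)},\dots,q^{(4)}$ and using the explicit $\phi$ from that lemma, the fourth vertex becomes a point $p = \phi(q^{(3)})$, so that $Q'$ has vertices $(0,0),(1,0),p,(0,1)$ listed in order, exactly the normal form of the special-case theorems. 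Because affine maps preserve parallelism and the property that no three points lie on a line (Proposition~\ref{prop:affine trans properties}), $Q'$ is again not a parallelogram and has no three vertices on a line.

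I would then split into two cases according to whether $Q'$ is a trapezoid. If $Q'$ is not a trapezoid, Theorem~\ref{thm:theorem_1_square} directly supplies a line $L'$ with its three properties for $Q'$. If $Q'$ is a trapezoid, then since it is not a parallelogram exactly one pair of opposite sides is parallel; a short computation with the side directions shows this forces either $p_1 = 1$ or $p_2 = 1$, but not both. When $p_1 = 1$, $Q'$ already has the form $(0,0),(1,0),(1,s),(0,1)$ with $s = p_2 \notin\{0,1\}$ demanded by Theorem~\ref{thm:theorem_1_trapezoid_but_not_parallelogram}. When $p_2 = 1$, I would first apply the coordinate-swap affine map $\psi(x) = (x_2, x_1)$; since reversing the traversal order of a quadrilateral leaves it unchanged, $\psi(Q')$ is the quadrilateral $(0,0),(1,0),(1,p_1),(0,1)$, again of the required form with $s = p_1 \notin\{0,1\}$, so Theorem~\ref{thm:theorem_1_trapezoid_but_not_parallelogram} applies and produces a line. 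In every case I obtain a line in the normal-form plane, and I define $L$ to be its preimage under $\phi$ (composed with $\psi^{-1}$ when the swap was used); by Proposition~\ref{prop:affine trans properties}, $L$ is a line.

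The verification of the three properties is then pure transport. For property~2, if $H$ is an ellipse or hyperbola tangent to the four extended sides of $Q$ with center $c$, then $\phi(H)$ is a conic of the same kind, centered at $\phi(c)$, and tangent to the four extended sides of $Q'$ (the images of the extended sides of $Q$ are exactly the extended sides of $Q'$), using parts~3 and~5 of Proposition~\ref{prop:affine trans properties}; property~2 of the special-case theorem places $\phi(c)$ on $L'$, hence $c \in L$. Property~3 runs the same argument backwards: a non-special point of $L$ maps to a non-special point of $L'$, which by the special-case theorem is the center of a tangent conic, whose preimage under $\phi$ is the desired tangent conic for $Q$. For property~1, I would note that $\phi$ carries the midpoints of the diagonals of $Q$ to the midpoints of the diagonals of $Q'$ (part~4 of Proposition~\ref{prop:affine trans properties}), and, when $Q$ is not a trapezoid, carries the two hidden vertices of Definition~\ref{defn:two hidden vertices, third diag midpoint} to the intersection points of opposite sides of $Q'$ and hence the third diagonal midpoint of $Q$ to the point $\tau$ of Theorem~\ref{thm:theorem_1_square}; since $\mu,\nu,\tau$ lie on $L'$, their preimages lie on $L$.

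The hard part will not be any single calculation but the bookkeeping of the trapezoid case: confirming that the two vertex-normal forms are the only possibilities, that the coordinate swap legitimately reduces the second to Theorem~\ref{thm:theorem_1_trapezoid_but_not_parallelogram}, and then checking that the finitely many excluded points match up exactly under the affine correspondence — the two diagonal midpoints in the trapezoid case and those together with the third diagonal midpoint otherwise — so that properties~1 and~3 remain mutually consistent.
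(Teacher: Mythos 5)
Your proposal is correct and follows essentially the same route as the paper: reduce via Lemma \ref{lemma:affine reduction to angle quadrilateral} to the normal forms of Theorems \ref{thm:theorem_1_square} and \ref{thm:theorem_1_trapezoid_but_not_parallelogram}, then transport the line, the special points, and the tangent conics back through $\phi^{-1}$ using Proposition \ref{prop:affine trans properties}. If anything, your handling of the trapezoid subcase (distinguishing $p_1=1$ from $p_2=1$ and inserting the coordinate swap) is more explicit than the paper's, which leaves that bookkeeping to the reader.
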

\begin{proof}
By Lemma \ref{lemma:affine reduction to angle quadrilateral}, there exists an affine transformation $\phi$ such that $\phi(Q)$ is a quadrilateral three of whose vertices are $(0, 0)$, $(1, 0)$, $(0, 1)$. Since affine transformations preserve parallelism and $Q$ is not a parallelogram, $\phi(Q)$ is not a parallelogram.

If $\phi(Q)$ is not a trapezoid, by Theorem \ref{thm:theorem_1_square} there exists a line $L'$ satisfying the desired properties for $\phi(Q)$. An application of Proposition \ref{prop:affine trans properties} completes the proof. We provide the details here but omit them in similar future arguments. Let
$$S \coloneqq \{\text{center of } K: K \text{ is an ellipse or hyperbola tangent to } Q\}.$$

Then
\begin{align*} \begin{split}
    \phi(S) &= \{\phi(\text{center of } K): K \text{ is an ellipse or hyperbola tangent to } Q\}\\
        &= \{\text{center of } \phi(K): K \text{ is an ellipse or hyperbola tangent to } Q\}\\
        &= \{\text{center of } K: \phi^{-1}(K) \text{ is an ellipse or hyperbola tangent to } Q\}\\
        &= \{\text{center of } K: K \text{ is an ellipse or hyperbola tangent to } \phi(Q)\}.
\end{split} \end{align*}

Let $\mu'$, $\nu'$ be the midpoints of the two diagonals of $\phi(Q)$, and let $\tau'$ be the third diagonal midpoint of $\phi(Q)$. By Proposition \ref{prop:affine trans properties}, $\phi^{-1}(\mu')$, $\phi^{-1}(\nu')$ are the midpoints of the two diagonals of $Q$, and $\phi^{-1}(\tau')$ is the third diagonal midpoint of $Q$. By Theorem \ref{thm:theorem_1_square}, $L' \supseteq \{\mu', \nu', \tau'\}$ and $\phi(S) = L' \bs \{\mu', \nu', \tau'\}$, and it of course follows that
\begin{align*} \begin{split}
    & \phi^{-1}(L') \supseteq \{\phi^{-1}(\mu'), \phi^{-1}(\nu'), \phi^{-1}(\tau')\},\\
    & S = \phi^{-1}(L') \bs \{\phi^{-1}(\mu'), \phi^{-1}(\nu'), \phi^{-1}(\tau')\}.
\end{split} \end{align*}

Finally, note that since $L'$ is a line, $\phi^{-1}(L')$ is a line.

If $\phi(Q)$ is a trapezoid but not a parallelogram, by Theorem \ref{thm:theorem_1_trapezoid_but_not_parallelogram} there exists a line $L'$ satisfying the desired properties for $\phi(Q)$. An application of Proposition \ref{prop:affine trans properties}, similar to what is presented above, completes the proof.
\end{proof}

We now proceed to prove Theorem \ref{thm:theorem 2}. A \textit{complete quadrangle} is a set of four points, no three lying on a line, and the six lines which join them \cite{cqawolfram}. We do not use the notion of a complete quadrangle directly, but it is related to Theorem \ref{thm:theorem 2}.

\begin{theorem}[The locus of the center of a passing conic is the nine-point conic] \label{thm:theorem 2}
Let $Q$ be a quadrilateral such that $Q$ is not a trapezoid, the diagonals of $Q$ are not parallel, and no three vertices of $Q$ lie on a line. There exists a conic section $H$ that satisfies the following properties:
\begin{enumerate}
	\item[1.] If $c$ is a center of some conic that passes through the four vertices of $Q$, then $c$ lies on $H$.
	\item[2.] Every point of $H$ is the unique center of some conic that passes through the four vertices of $Q$.
	\item[3.] $H$ contains the midpoints of the four sides of $Q$, the midpoints of the two diagonals of $Q$, the two hidden vertices of $Q$, and the intersection of the two diagonals of $Q$.
	\item[4.] The center of $H$ is the arithmetic mean of the four vertices of $Q$.
	\item[5.] $H$ is a hyperbola if and only if $Q$ is strictly convex.
\end{enumerate}
\end{theorem}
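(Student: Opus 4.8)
The plan is to deduce Theorem \ref{thm:theorem 2} from its special case Theorem \ref{thm:theorem 2 square} by the same affine-reduction strategy used to prove Theorem \ref{thm:theorem 1}. Concretely, Lemma \ref{lemma:affine reduction to angle quadrilateral} supplies an affine transformation $\phi$ such that $\phi(Q)$ is a quadrilateral three of whose vertices are $(0,0)$, $(1,0)$, $(0,1)$; writing $p \coloneqq \phi(q^{(3)})$ for the image of the remaining vertex, $\phi(Q)$ has vertices $(0,0)$, $(1,0)$, $p$, $(0,1)$ listed in order. I would then set $H \coloneqq \phi^{-1}(H')$, where $H'$ is the nine-point conic furnished by Theorem \ref{thm:theorem 2 square} for $\phi(Q)$, and verify that $H$ has all five desired properties.

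First I would check that $\phi(Q)$ satisfies the hypotheses of Theorem \ref{thm:theorem 2 square}. Each condition — $Q$ simple, $Q$ not a trapezoid, the diagonals of $Q$ not parallel, and no three vertices of $Q$ on a line — is affine-invariant (parallelism, incidence, and simplicity are preserved by parts 2 and 4 of Proposition \ref{prop:affine trans properties}), so each transfers verbatim to $\phi(Q)$. Theorem \ref{thm:theorem 2 square} then applies and produces $H'$, which by part 3 of Proposition \ref{prop:affine trans properties} is a genuine ellipse or hyperbola; hence so is $H = \phi^{-1}(H')$, and $H$ is of the same kind as $H'$.

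The bulk of the work is transferring the five conclusions across $\phi^{-1}$. For parts 1 and 2 I would argue exactly as in the proof of Theorem \ref{thm:theorem 1}: writing $S$ for the set of centers of conics through the vertices of $Q$, the identity $\phi(S) = \{\text{center of } K : K \text{ passes through the vertices of } \phi(Q)\}$ holds because $\phi$ carries a conic through the vertices of $Q$ to a conic through the vertices of $\phi(Q)$ and carries its center to the center of the image, by part 3 of Proposition \ref{prop:affine trans properties}; combined with parts 1 and 2 of Theorem \ref{thm:theorem 2 square} this yields $S = \phi^{-1}(H') = H$. For part 3, part 4 of Proposition \ref{prop:affine trans properties} guarantees that $\phi^{-1}$ sends midpoints of sides to midpoints of sides and the diagonal intersection to the diagonal intersection, while part 2 sends the intersection points of extended opposite sides (the hidden vertices of $\phi(Q)$) to the hidden vertices of $Q$; hence the nine listed points of $H'$ pull back to the nine analogous points of $Q$, which therefore lie on $H$. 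Part 4 is immediate once I observe that an affine map preserves arithmetic means, being an affine combination with coefficients summing to $1$: $\phi^{-1}$ sends the center of $H'$, namely the mean of the vertices of $\phi(Q)$, to the center of $H$, and simultaneously sends that mean to the mean of the vertices of $Q$. Part 5 reduces to the facts that $H$ is a hyperbola if and only if $H'$ is (affine maps preserve conic type) and that $Q$ is strictly convex if and only if $\phi(Q)$ is (affine maps preserve convexity).

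I expect the only real friction to be bookkeeping rather than mathematics: the genuine content was already discharged in Theorem \ref{thm:theorem 2 square}, and here one need only check that $\phi$ and $\phi^{-1}$ correctly intertwine every geometric notion in play — centers, midpoints, hidden vertices, conic type, and convexity. The step most deserving of care is the nine-point correspondence in part 3, since it requires matching each characteristic point of $\phi(Q)$ to the correct construction on $Q$; and if one allows non-simple $Q$, the convexity clause of part 5 must be read so that a self-intersecting $Q$ counts as not strictly convex, matching the ellipse case. All of these are routine consequences of Proposition \ref{prop:affine trans properties}.
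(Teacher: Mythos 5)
Your proposal is correct and follows exactly the paper's own route: reduce to Theorem \ref{thm:theorem 2 square} via the affine transformation from Lemma \ref{lemma:affine reduction to angle quadrilateral}, then pull the nine-point conic and all five properties back through $\phi^{-1}$ using Proposition \ref{prop:affine trans properties}. You actually spell out more of the bookkeeping (affine invariance of the hypotheses, the nine-point correspondence, preservation of arithmetic means and convexity) than the paper does, and you rightly flag the simplicity hypothesis of Theorem \ref{thm:theorem 2 square} that the general statement leaves implicit.
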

\begin{proof}
By Lemma \ref{lemma:affine reduction to angle quadrilateral}, there exists an affine transformation $\phi$ such that $\phi(Q)$ is a quadrilateral three of whose vertices are $(0, 0)$, $(1, 0)$, $(0, 1)$. Since affine transformations preserve parallelism and $Q$ is not a parallelogram, $\phi(Q)$ is not a parallelogram.

By Theorem \ref{thm:theorem 2 square}, there exists a conic section $H'$ satisfying the desired properties for $\phi(Q)$. An application of Proposition \ref{prop:affine trans properties}, similar to what was given in the proof of Theorem \ref{thm:theorem 1}, completes the proof.
\end{proof}
\begin{corollary}
If $S$ is a line segment connecting any two vertices of $Q$, $H$ contains the midpoint of $L$. If $L_1$ is a line passing through any two vertices of $Q$ and $L_2$ is the line passing through the remaining two vertices, $H$ contains the intersection of $L_1$ and $L_2$.
\end{corollary}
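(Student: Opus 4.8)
The plan is to reduce both assertions directly to part 3 of Theorem \ref{thm:theorem 2}, which already places nine named points on $H$. The corollary simply re-expresses those nine points in a coordinate-free, vertex-centric language, so the entire task is combinatorial bookkeeping: I must match every midpoint and every line-intersection named in the corollary with one of the nine points guaranteed by part 3, and check that the case split is exhaustive. No new computation beyond Theorem \ref{thm:theorem 2} should be required.

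First I would dispose of the midpoints. Labelling the vertices of $Q$ in order as $q^{(1)}, q^{(2)}, q^{(3)}, q^{(4)}$, there are exactly $\binom{4}{2} = 6$ segments joining pairs of vertices. Four of them, namely $q^{(1)}q^{(2)}$, $q^{(2)}q^{(3)}$, $q^{(3)}q^{(4)}$, $q^{(4)}q^{(1)}$, are the sides of $Q$, while the remaining two, $q^{(1)}q^{(3)}$ and $q^{(2)}q^{(4)}$, are its diagonals. Consequently the six midpoints are precisely the four side-midpoints together with the two diagonal-midpoints, and part 3 of Theorem \ref{thm:theorem 2} already asserts that all of them lie on $H$. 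This settles the first sentence of the corollary.

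Next I would treat the intersections. A partition of the four vertices into two unordered pairs can be formed in exactly three ways, and each such partition supplies the pair of lines $L_1, L_2$ of the statement. Two of these partitions split $Q$ into a pair of opposite sides, $\{q^{(1)}q^{(2)}, q^{(3)}q^{(4)}\}$ and $\{q^{(2)}q^{(3)}, q^{(4)}q^{(1)}\}$, whose intersections are by Definition \ref{defn:two hidden vertices, third diag midpoint} the two hidden vertices of $Q$; the third partition pairs the vertices along the diagonals $q^{(1)}q^{(3)}$ and $q^{(2)}q^{(4)}$, whose intersection is the diagonal intersection. The hypotheses of Theorem \ref{thm:theorem 2} ensure every such intersection exists and is unique: since $Q$ is not a trapezoid neither pair of opposite sides is parallel, the diagonals are assumed non-parallel, and since no three vertices lie on a line, $L_1$ and $L_2$ always pass through disjoint pairs of vertices and cannot coincide. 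Part 3 of Theorem \ref{thm:theorem 2} lists all three of these points on $H$.

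The only point demanding care is the exhaustiveness of the two case splits, that is, confirming that the six vertex-segments and the three vertex-pairings enumerate every midpoint and every intersection named in the corollary without omission or duplication. There is no analytic obstacle whatsoever; the result is a genuine corollary of Theorem \ref{thm:theorem 2}, obtained purely by relabelling its nine points.
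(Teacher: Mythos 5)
Your proposal is correct and is exactly the intended argument: the paper states this corollary without proof, precisely because it amounts to the bookkeeping you carry out --- the six vertex-pair midpoints are the four side-midpoints plus the two diagonal-midpoints, and the three complementary vertex-pairings yield the two hidden vertices and the diagonal intersection, all nine of which part~3 of Theorem~\ref{thm:theorem 2} already places on $H$. Nothing further is needed.
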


\section{Conclusion}
We have stated and proved in this paper two theorems regarding the locus of the center of a conic that is (a) tangent to the four extended sides of a quadrilateral and (b) passing through the four vertices of a quadrilateral. Theorems \ref{thm:theorem 1} and \ref{thm:theorem 2} are surprising and elegant. Take Theorem \ref{thm:theorem 1}, for example. Why is it be that the locus of the center of a conic tangent to a quadrilateral is the Newton line of that quadrilateral? By default we would expect the locus to be a quadratic curve. Surely there must be some very deep and profound explanation why the locus is actually a line. Is Theorem \ref{thm:theorem 1} a special case of some greater theorem that we were not able to see in this paper? Is there some general principle that makes Theorem \ref{thm:theorem 1} hold? Our bland proof, unfortunately, answers neither of those questions; but it is valuable in that it rigorously establishes the truth of Theorem \ref{thm:theorem 1}, a theorem whose beauty is hard to deny.

The value of Theorems \ref{thm:theorem 1} and \ref{thm:theorem 2} is mostly aesthetic; however, Theorems \ref{thm:theorem 1} and \ref{thm:theorem 2} do have applications in practical problems, particularly those involving conics and quadrilaterals. Consider Theorem \ref{thm:theorem 1}, for example. Since a conic has five degrees of freedom and the tangency condition takes away four, there is one degree of freedom remaining. Theorem \ref{thm:theorem 1} provides an easy parameterization of this degree of freedom, and from the proof of Theorem \ref{thm:theorem_1_square} one can extract the exact formula for the conic. The problem of inscribing the biggest possible ellipse inside a quadrilateral can thus easily be solved. Theorem \ref{thm:theorem 1} can be used in mechanical engineering in determining how far can a cone be inserted into a quadrilateral-shaped opening.

\section{Acknowledgements}
I would first and foremost like to thank Williams College and its generous financial aid program for having me as a student, which is what made the production of this paper possible. I would like to thank Professor Cesar Silva, who introduced me to and showed me the charm of pure mathematics and helped me tremendously in pursuing mathematical research. I would like to thank the Wolfram Fundamental Physics Project, which provided me with invaluable guidance in my first steps as a researcher. I would like to thank Professor Ralph Morrison, Professor Cesar Silva, and Professor Steven Miller of the Williams Math Department for advising me on the formatting and publication of this paper. I am grateful to have access to Wolfram Mathematica, where I performed visualizations and checked algebraic results, which was instrumental in completing this paper. The YouTube content creator CodeParade has played a huge part in the production of this paper by producing Conjecture \ref{conj:codeparade}. Last but not least, I would like to thank my family, my previous olympiad physics teachers Darkhan Shadykul and Margulan Tursynkhan, the National School of Physics and Mathematics in Astana, \dq{Daryn} Center of the Ministry of Education and Science of Kazakhstan, MIT OpenCourseWare, educational content creators on YouTube, and all the other wonderful people and organizations, for giving me the background that I am so lucky to have.

\bibliographystyle{plain}

\bibliography{bibfile.bib}

\end{document}